\newenvironment{claimproof}{
	\textit{Proof of Claim.}
}{
	\hfill$\diamond$
}
\newcommand{\ifftext}{\quad \text{if and only if} \quad}
\DeclareMathOperator{\ord}{ord}
\DeclareMathOperator{\Gal}{Gal}
\DeclareMathOperator{\charK}{char}
\DeclareMathOperator{\rank}{rank}
\newcommand{\IF}{\mathbb{F}}
\newcommand{\IZ}{\mathbb{Z}}
\newcommand{\IN}{\mathbb{N}}
\newcommand{\IQ}{\mathbb{Q}}
\newcommand{\Oo}{\mathcal{O}}
\newcommand{\Mm}{\mathcal{M}}
\newcommand{\Aa}{\mathcal{A}}
\theoremstyle{definition}
\newtheorem{definition}{Definition}[section]
\newtheorem{theorem}[definition]{Theorem}
\newtheorem{lemma}[definition]{Lemma}
\newtheorem{corollary}[definition]{Corollary}
\newtheorem{fact}[definition]{Fact}
\newtheorem{remark}[definition]{Remark}
\title{An Elementary Proof of the Local Kronecker-Weber Theorem}
\author{Jochen Koenigsmann, Benedikt Stock}
\begin{document}
\maketitle
\begin{abstract}
	We will present a novel elementary, self-contained, and explicit proof of the local Kronecker-Weber theorem. Apart from discrete valuation theory, it does not make use of any tools beyond those introduced in a second undergraduate course on algebra. In particular, we will not make use of results from local class field theory or Galois cohomology.
\end{abstract}


The Kronecker--Weber theorem states that every finite abelian extension of the rational numbers is contained in a cyclotomic extension.  A number of proofs of this result are known. Culler~\cite{Culler07} presents two proofs; an elementary approach based on David Hilbert's original proof and one that makes use of class field theory. Another classical approach proceeds via the local Kronecker–Weber theorem---the subject of this paper---in which $\IQ$ is replaced by $\IQ_p$. Once the local version is established, it is not hard to deduce the global version via completions, Minkowski's bound, and basic properties of discriminants (cf.~\cite[Chapter 14]{Guillot18} for an excellent presentation of a complete proof). 

As in the global case, several proofs of the local Kronecker--Weber theorem are known. Perhaps the most classical one goes via the fundamental theorem of local class field theory, a result that itself is usually either proved via the cohomological approach (cf.~\cite{Guillot18}) or Lubin--Tate theory (cf.~\cite{Childress08, Hazewinkel75}). Our proof is distinct from those as it makes no mention of local class field theory at all. Papers by Rosen~\cite{Rosen81} and Kozuka~\cite{Kozuka90} have a similar goal in mind. However, the new approach presented here differs from these in that it is self-contained and does not even mention Lubin--Tate constructions or Galois cohomology. Instead, our considerations are merely based on discrete valuation theory combined with basic results from Galois theory and Kummer theory. Moreover, it is explicit in that it provides a formula for a cyclotomic extension containing the given abelian extension, described in terms of the degree of the abelian extension. A proof with a similar structure to ours can be found in Washington's classical book~\cite[Chapter 14]{Washington97}; we will discuss the key similarities and differences in more detail at the end of this paper in Remark~\ref{rem:Washington}.

In  Section~\ref{sec:facts} we will remind the reader of some facts about local fields before presenting the proof of the local Kronecker--Weber theorem in Section \ref{sec:proof}.

\section{Some facts about local number fields}
\label{sec:facts}
Let us first fix some notation. We call finite extensions of $\IQ_p$ \emph{local number fields} or \emph{$p$-adic fields}. For a $p$-adic field $F$, we denote by $v_F$ (or simply $v$ when the field is clear from context) the unique extension of the $p$-adic valuation $v_p$ to $F$. We write $\Oo_F$ for the ring of integers of $F$, $\Mm_F$ for its unique maximal ideal, and $\overline{F}$ for its residue field $\Oo_F / \Mm_F$. 

For an extension $L/F$ of local number fields of degree $n$, we denote by $e = e(L/F)$ and $f = f(L/F)$ the \emph{ramification index} and \emph{inertia degree} of the extension $L/F$, respectively. We recall the fundamental equality $n = ef$ (cf.~\cite[Theorem~2.38]{Guillot18}, \cite[II.~6.8]{Neukirch99}). More precisely, there exists an intermediate field $L_0$, called the \emph{inertia subfield} of $L$, with the property that for any intermediate field $F \subseteq K \subseteq L$, the extension $K/F$ is unramified if and only if $K \subseteq L_0$ (cf.~\cite[Corollary~2.43]{Guillot18}).

\subsection{Cyclotomic extensions of local fields}
We introduce some additional notation. For a field $F$, let $\mu_F$ denote the set of all roots of unity contained in $F$. Whenever $F$ contains all $n$-th roots of unity for some $n \in \mathbb{N}_{>0}$, we write this set as~$\mu_n$. A fixed primitive $n$-th root of unity is denoted by~$\zeta_n$, and we write $\zeta_{q^{\infty}}$ for the set of all $q$-power roots of unity. Finally, when several roots of unity are involved, we tacitly assume that they form a compatible system (for example, $\zeta_{q^2}^q = \zeta_q$).

Recall that the cyclotomic extensions of~$\IQ_p$ admit explicit descriptions (see~\cite[Propositions~2.46 and~2.48]{Guillot18}), which are summarised in the following lemma.

\begin{lemma}
	\label{lem:cycExt}
	Let $F = \IQ_p(\mu_m)$ be a cyclotomic extension of $\IQ_p$.
	\begin{enumerate}[(i)]
		\item If $(m,p) = 1$, then $F/\IQ_p$ is unramified of degree $k = \ord_m(p)$, the order of $p +m\IZ$ in $(\IZ/m\IZ)^\times$. Moreover,
		\[
		\Gal(F/\IQ_p) \cong \Gal(\IF_{p^k}/\IF_p) \cong C_k.
		\]
		\item If $m = p^l$, then $F/\IQ_p$ is totally ramified of degree $(p - 1)p^{l - 1}$ and $1 - \zeta_{p^l}$ is a uniformiser. Moreover,
		\[
		\Gal(F/\IQ_p) \cong \bigl(\IZ/p^l\IZ\bigr)^\times.
		\]
	\end{enumerate}
\end{lemma}

\begin{proof}[Proof (Sketch)]
	(i). We first observe that $\zeta_m \in \Oo_F$ since $\zeta_m^m=1$ implies $v_F(\zeta_m)=0$. In particular, the minimal polynomial $f(X)$ of $\zeta_m$ over $\IQ_p$ lies in $\IZ_p[X]$, so $\overline{f}(X)\in\IF_p[X]$, where $\overline{f}(X)$ divides $X^m-1$. Since $(m,p)=1$, this means $\overline{f}(X)$ is separable over $\IF_p$, and hence irreducible by Hensel's Lemma. In particular, $n=f$ and $\overline{F} = \IF_p(\overline{\zeta_m})$. By the fundamental equality $n=ef$, we obtain $e=1$, so $F/\IQ_p$ is unramified.
	
	By the Galois theory of finite fields, $\Gal(\overline{F}/\IF_p)$ is cyclic of order $k$, generated by the Frobenius automorphism $x\mapsto x^p$, which is determined entirely by its action on $\overline{\zeta_m}$. The order of the Frobenius automorphism is therefore the smallest positive number $k$ for which $\overline{\zeta_m}^{p^k}=\overline{\zeta_m}$, i.e., $p^k\equiv1\pmod m$, so $k = \ord_m(p)$. Finally, using again that any element in $\Gal(\overline{F}/\IF_p)$ is determined by the image of $\overline{\zeta_m}$, every such element lifts to an element of $\Gal(F/\IQ_p)$, which means the natural restriction map 
	\[ 
	\Gal(F/\IQ_p)\to\Gal(\overline{F}/\IF_p) 
	\]
	is surjective. Since $f=n$, this restriction is in fact an isomorphism of Galois groups, from which the remaining claims follow. 
	
	(ii). Define 
	\[
	\Phi(X) \coloneq 1 + X^{p^{l-1}} + \cdots + X^{(p-1)p^{l-1}}.
	\]
	It is easy to see that $\Phi(\zeta_{p^l}) =0$. Moreover, applying the Eisenstein criterion to $\Phi(X+1)$, we see that $\Phi(X)$ is irreducible, which means $\Phi$ is the minimal polynomial of $\zeta_{p^l}$ over $\IQ_p$. The other roots of $\Phi$ are exactly the $\phi(p^l)= (p-1)p^{l-1}$ primitive $p^l$-th roots of unity of the form $\zeta_{p^l}^i$ for $(i, p) = 1$. Hence, the elements of $\Gal(F/\IQ_p)$ are precisely determined by $\zeta_{p^l} \mapsto \zeta_{p^l}^i$, from which we recover the usual isomorphism $\Gal(F/\IQ_p) \cong (\IZ/p^l\IZ)^\times$.
	
	Next, we observe that for every $\sigma \in \Gal(F/\IQ_p)$, $v(\sigma(x)) = v(x)$ for all $x \in F$ by the uniqueness of $v$. Hence, using
	\[
	p = \Phi(1)  = \prod_{\sigma \in \Gal(F/\IQ_p)}\sigma(1-\zeta_{p^l}),
	\]
	we see that $v(1-\zeta_{p^l}) = \frac{v(p)}{[F:\IQ_p]}$. Thus, $1-\zeta_{p^l}$ is the required uniformiser, and it also witnesses that $F/\IQ_p$ is totally ramified.
\end{proof}
From the fact that $\IF_p^\times = \mu_{p - 1}$, we obtain the following description of the roots of unity in $\IQ_p$.
\begin{corollary} \label{cor:rootsOfUnity}
	In $\IQ_p$, the roots of unity are
	\[
	\mu_{\IQ_p} = \begin{cases}
		\mu_{p - 1} & \text{if $p \ne 2$}; \\
		\mu_2 & \text{if $p = 2$.}
	\end{cases}
	\]
\end{corollary}

\subsection{Residues and generators in \texorpdfstring{$\IQ_p$}{Qp}}
The goal of this section is to study the multiplicative structure of $p$-adic fields by explicit means.
Our starting point is the filtration
\[
F^\times \supseteq U_F^{(0)} \supseteq U_F^{(1)} \supseteq \cdots \supseteq U_F^{(k)} \supseteq \cdots
\]
of the multiplicative group $F^\times$ of a $p$-adic field $F$ by \emph{higher unit groups}.

\begin{definition}
	Let $F$ be a $p$-adic field with uniformiser $\pi$. The \emph{$k$-th higher unit group} ($k \in \IN$) is defined as
	\begin{align*}
		U^{(0)}_F & \coloneqq \Oo_F^\times; \\
		U^{(k)}_F & \coloneqq \bigl\{x \in \Oo_F^\times: x \equiv 1 \; (\text{mod }{\pi^k})\bigr\} \quad \text{for $k \ge 1$.}
	\end{align*}
\end{definition}

For the remainder of this section,  we display some calculations involving higher unit groups, some of which are standard and others which may be less well-known.

\begin{fact}
	\label{fact:multgrp_direct_prod}
	Let $F$ be a $p$-adic field with uniformiser $\pi$. Denote its prime-to-$p$ roots of unity by $\mu'_F$. The multiplicative group $F^\times$ can be decomposed as the direct product
	\[
	F^\times = \pi^{\IZ} \times \mu'_F \times U_F^{(1)}.
	\]
\end{fact}
\begin{proof}
	Any $x \in F^\times$ can be uniquely written as $x = \pi^{v(x)}(\pi^{-v(x)}x)$, where $\pi^{-v(x)}x \in \Oo_F^\times$. Therefore, $F^\times =  \pi^{\IZ} \times \Oo_F^\times$. Now,
	\begin{equation}
		\label{eq:exact_seq_U1}
		1 \rightarrow U_F^{(1)} \rightarrow \Oo_F^\times \rightarrow  \overline{F}^\times  \rightarrow 1
	\end{equation}
	is split via the section $\overline{\omega} \mapsto \omega$ for each $\omega \in \mu'_F$, so $\Oo_F^\times = \mu'_F \times U_F^{(1)}$ as required.
\end{proof}
\begin{fact}
	\label{fact:TowerLawU}
	Let $F$ be a $p$-adic field. For any $k \ge 1$, there exist isomorphisms
	\begin{align*}
		U_F^{(0)}/U_F^{(1)} & \cong (\overline{F}^\times, \cdot) \\
		U_F^{(k)}/U_F^{(k+1)} & \cong (\overline{F},+).
	\end{align*}
\end{fact}
\begin{proof}
	The first isomorphism is an immediate consequence of the short exact sequence \eqref{eq:exact_seq_U1}. For $k \ge 1$, the map
	\begin{align*}
		U_F^{(k)}/U_F^{(k+1)} & \longrightarrow \overline{F}, \\
		(1 + \pi^k a)U_F^{(k + 1)} & \longmapsto a+\Mm_F
	\end{align*}
	is a well-defined group isomorphism; see \cite[Chapter~II, (3.10)]{Neukirch99}, \cite[Lem\-ma~2.56]{Guillot18}, and Remark~\ref{rem:mod_rule} below.
\end{proof}

\begin{remark} \label{rem:mod_rule}
	The fact that the above maps are indeed isomorphisms relies on the following rule linking additive and multiplicative congruences: for any $a, a' \in U_F^{(0)}$,
	\[
	a \equiv a' \pmod{\pi^k} \ifftext a \equiv a' \quad\bigl(\mathrm{mod}\ U_F^{(k)}\bigr).
	\]
	The equivalence is easily verified:
	\[
	\exists b \in \Oo_F : a' = a + b\pi^k \Longleftrightarrow \exists b \in \Oo_F : \frac{a'}a = 1 + a^{-1}b\pi^k \Longleftrightarrow \frac{a'}a \in U_F^{(k)}.
	\]
\end{remark}

\begin{corollary} 
	\label{cor:representation}
	Let $F$ be a $p$-adic field and $\Aa \subseteq \Oo_F$ be a set of representatives for $\overline{F} = \Oo_F/\Mm_F$. Then any $x \in U_F^{(1)}/U_F^{(k+1)}$ has a unique representation of the form $x = aU_F^{(k+1)}$, where
	\[
	a = 1 + a_1 \pi + \cdots + a_k \pi^k,
	\]
	and all coefficients $a_i$ lie in $\Aa$.
\end{corollary}
\begin{proof}
	Assume that $a = 1 + \sum_{i = 1}^k a_i\pi^i$ and $a' = 1 + \sum_{i = 1}^k a_i'\pi^i$ represent the same element modulo $U_F^{(k + 1)}$. In other words, there exists $1 + \pi^{k + 1}b \in U_F^{(k + 1)}$ with $a = a'(1 + \pi^{k + 1}b)$. Assume $a_i \ne a_i'$ for some $i$, in which case we choose the minimal such $i$. Then
	\[
	v(a - a') = v((a_i - a_i') \pi^i) = v(\pi^i),
	\]
	which contradicts $a - a' = a' \pi^{k + 1}b$. Therefore, $a_i = a_i'$ for all $i = 1, \ldots, k$.
	
	The fact that any element in $U_F^{(1)}/U_F^{(k+1)}$ can be written as claimed follows from a counting argument. Observe that
	\[
	\bigl|U_F^{(1)}/U_F^{(k+1)}\bigr| =\bigl|U_F^{(1)}/U_F^{(2)}\bigr|\cdots \bigl|U_F^{(k)}/U_F^{(k+1)}\bigr| = |\overline{F}|^k
	\]
	by Fact~\ref{fact:TowerLawU}, which is exactly the number of distinct representatives we consider.
\end{proof}

The following residue calculation is crucial for many of the results that follow.

\begin{lemma} \label{lem:p/(1-zeta)^p}
	Let $F$ be a $p$-adic field containing $\zeta_p$ and let $\pi = \zeta_p - 1$. Then
	\[
	\frac{p}{\pi^{p-1}} \equiv - 1 \pmod{\pi}.
	\]
	In particular, $p/\pi^{p - 1}$ reduces to $-1$ in the residue field.
\end{lemma}
\begin{proof}
	Consider
	\[
	f(X) = X^{p - 1} + \cdots + X + 1 = \prod_{i=1}^{p-1}(X-\zeta_p^i).
	\]
	Observe that
	\[
	\frac{p}{\pi^{p-1}} =\frac{f(1)}{(-1)^{p-1}(1-\zeta_p)^{p-1}} = (-1)^{p-1}\prod_{i=1}^{p-1}\biggl(\frac{1-\zeta^i_p}{1-\zeta_p}\biggr) = (-1)^{p-1}\prod_{i=1}^{p-1}\sum_{j=0}^{i-1}\zeta_p^j.
	\]
	For each factor, we compute
	\[
	\sum_{j=0}^{i-1}\zeta_p^j = \sum_{j=0}^{i-1}(1+\pi)^j \equiv \sum_{j=0}^{i-1}1 = i \pmod{\pi}.
	\]
	The claim then follows using that $p \in (\pi)$, $(-1)^{p-1} \equiv 1 \pmod{p}$, and Wilson's theorem $(p - 1)! \equiv -1 \pmod{p}$.
\end{proof}

The three following results and their proofs are slight adaptations of an idea first observed by Koenigsmann \cite[Lemma 3.2]{Koenigsmann03}.
\begin{lemma}
	\label{lem:U_Kpowers_general}
	Let $F/\IQ_p(\zeta_p)$ be a finite extension with ramification index $e \coloneqq e(F/\IQ_p(\zeta_p))$. Then
	\[
	U_F^{(ep+1)} \subseteq \bigl(U_F^{(e)}\bigr)^p.
	\]
\end{lemma}
\begin{proof}
	Recall that $\IQ_p(\zeta_p)/\IQ_p$ is totally ramified of degree $p - 1$ by Lemma \ref{lem:cycExt}. Let $\pi = \zeta_p - 1$ be a uniformiser for $\IQ_p(\zeta_p)$ and $\Pi$ any uniformiser for $F$. If the valuation $v$ on $F$ is normalised such that $v(p) = 1$, then $v(\pi) = \tfrac 1{p - 1}$ and $v(\Pi) = \tfrac 1{e(p - 1)}$. In particular,
	\[
	U_F^{(e)} = 1 + (\pi) \quad\text{and}\quad
	U_F^{(ep + 1)} = 1 + (\pi^p\Pi).
	\]
	Let $x = 1 + \pi^p a$ with $a \in (\Pi)$ be given. Then it suffices to show that
	\[
	(1 + \pi X)^p - (1 + \pi^p a) = 0
	\]
	has a solution in $\Oo_F$. Consider
	\[
	f(X) \coloneqq \frac{1}{\pi^p}\Bigl[(1 + \pi X)^p - (1 + \pi^pa)\Bigr] = 
	X^p - a + \sum_{i = 1}^{p - 1} \binom p i \pi^{i - p} X^i.
	\]
	Noting that
	\[
	p \pi^{1 - p}  \equiv -1 \pmod{\pi}
	\]
	for $i=1$ by Lemma~\ref{lem:p/(1-zeta)^p}, and
	\begin{equation*}
		v\left[\binom p i \pi^{i - p}\right]  = v(p)+ (i-p)v(\pi)= (i-1)v(\pi) > 0  
	\end{equation*}
	for $i = 2, \ldots, p - 1$, we conclude that $f(X)$ reduces to $\overline{f}(X) = X^p - X \in \overline{F}[X]$. Thus, the claim follows by Hensel's Lemma.
\end{proof}

In the above proof, we could also start with an element $x = 1 + \pi^p a$ with $a \in \Oo_F$ instead of $a \in (\Pi)$. In that case, $x\in F^{p}$ is equivalent to the existence of a solution to the equation 
\[
X^p - (1+\pi^pa) = 0.
\]
Applying a change of variables $X \mapsto 1+\pi X$, and using the same arguments as before, this becomes equivalent to $X^p-X-\overline{a}$ having a solution in $\overline{F}$ (in fact, a posteriori, this even shows equivalence with $x \in \bigl(U_F^{(e)}\bigr)^p$). We record this fact, which is essentially Koenigsmann's original statement, in the following lemma.
\begin{lemma}
	\label{lem:powersConverse}
	Let $F/\IQ_p(\zeta_p)$ be a finite extension and $a \in \Oo_F$. Then
	\[
	1 + \pi^pa \in F^p\iff \exists z \in \overline{F} : z^p-z-\overline{a} = 0.
	\]
\end{lemma}

Next, we observe that the inclusion stated in Lemma~\ref{lem:U_Kpowers_general} becomes an equality in the special case $F = \IQ_p(\zeta_p)$.
\begin{lemma}
	\label{lem:U_Kpowers}
	For $F = \IQ_p(\zeta_p)$, we have 
	\[
	U_F^{(p + 1)} = \bigl(U_F^{(1)}\bigr)^p.
	\]
\end{lemma}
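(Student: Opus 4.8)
The inclusion $U_K^{(p+1)} \subseteq \left(U_K^{(1)}\right)^p$ comes for free from Corollary \ref{cor:U_Kpowers_general} applied with $F = K$: by Lemma \ref{lem:cycExt} the ramification index is $e = p-1$, so $\tfrac{e}{p-1} = 1$ and $\tfrac{ep}{p-1}+1 = p+1$. Hence the actual task is the reverse inclusion $\left(U_K^{(1)}\right)^p \subseteq U_K^{(p+1)}$.

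My plan is to first replace $U_K^{(1)}$ by the smaller group $U_K^{(2)}$. Write $\pi = 1-\zeta_p$ for the uniformizer of $K$, so $v_K(\pi) = \tfrac{1}{p-1}$ and $\zeta_p = 1-\pi$ lies in $U_K^{(1)}$ but not in $U_K^{(2)}$. Since $U_K^{(1)}/U_K^{(2)} \cong \overline{K} = \F_p$ is cyclic of order $p$, the class of $\zeta_p$ generates it, and therefore $U_K^{(1)} = \bigcup_{j=0}^{p-1} \zeta_p^j\, U_K^{(2)}$. As the group is abelian and $\zeta_p^p = 1$, raising to the $p$-th power annihilates the $\zeta_p$-factor, so $\left(U_K^{(1)}\right)^p = \left(U_K^{(2)}\right)^p$.

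It then remains to check $\left(U_K^{(2)}\right)^p \subseteq U_K^{(p+1)}$, which is a short valuation estimate. Writing a general element of $U_K^{(2)}$ as $1 + z$ with $v_K(z) \geq \tfrac{2}{p-1}$ and expanding $(1+z)^p = 1 + \sum_{i=1}^{p} \binom{p}{i} z^i$, every term with $1 \leq i \leq p-1$ has valuation $v_K\!\left(\binom{p}{i}\right) + i\,v_K(z) \geq 1 + \tfrac{2i}{p-1} \geq \tfrac{p+1}{p-1}$ since $p \mid \binom{p}{i}$, while the $i = p$ term has valuation $p\,v_K(z) \geq \tfrac{2p}{p-1} \geq \tfrac{p+1}{p-1}$. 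Thus $(1+z)^p - 1 \in \M_K^{p+1}$, i.e. $(1+z)^p \in U_K^{(p+1)}$. Chaining the inclusions, $\left(U_K^{(1)}\right)^p = \left(U_K^{(2)}\right)^p \subseteq U_K^{(p+1)} \subseteq \left(U_K^{(1)}\right)^p$, so all three groups coincide.

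The one place where a little thought is needed is why one should pass to $U_K^{(2)}$ at all: expanding $(1+\pi c)^p$ directly for an arbitrary $1+\pi c \in U_K^{(1)}$ is not enough, because the linear term $p\pi c$ has valuation only $1 + \tfrac{1}{p-1} = \tfrac{p}{p-1}$, which lands in $U_K^{(p)}$ rather than $U_K^{(p+1)}$. The reduction above buys the missing unit of valuation cheaply. One could instead stay inside $U_K^{(1)}$ and recover it from the cancellation between the $i=1$ and $i=p$ terms guaranteed by Lemma \ref{lem:p/(1-zeta)^p} (noting $\overline{p/\pi^{p-1}} = -1$ and $\overline{c}^{p-1} \in \{0,1\}$ in $\F_p$), but that route needs an extra case distinction on whether $c$ is a unit.
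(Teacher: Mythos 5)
Your proof is correct, but it takes a genuinely different route from the paper's for the reverse inclusion. The paper expands $(1+a\pi)^p$ for an arbitrary $a\in\O_K$ and relies on the cancellation between the $i=1$ and $i=p$ binomial terms: using lemma \ref{lem:p/(1-zeta)^p} (i.e.\ $p\in-\pi^{p-1}+(\pi)^p$) it arrives at $(1+a\pi)^p\equiv 1+(a^p-a)\pi^p \Mod{(\pi^{p+1})}$, and then $\overline{a}^p=\overline{a}$ in $\overline{K}=\F_p$ kills the obstruction, with lemma \ref{lem:representation} handling the bookkeeping. You instead exploit the root of unity itself: since $\zeta_p=1-\pi$ represents a nontrivial class in the order-$p$ quotient $U_K^{(1)}/U_K^{(2)}\cong\F_p$, you get $U_K^{(1)}=\langle\zeta_p\rangle U_K^{(2)}$, and $\zeta_p^p=1$ gives $\bigl(U_K^{(1)}\bigr)^p=\bigl(U_K^{(2)}\bigr)^p$; after that a blunt valuation estimate on $(1+z)^p$ with $v(z)\geq\tfrac{2}{p-1}$ suffices, with no cancellation at all. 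Your version is shorter and avoids lemmas \ref{lem:p/(1-zeta)^p} and \ref{lem:representation} entirely (and your diagnosis of why the naive estimate on $U_K^{(1)}$ only reaches $U_K^{(p)}$ is exactly right); what the paper's computation buys in exchange is the explicit Artin--Schreier-type congruence $1+(a^p-a)\pi^p$, which runs parallel to the computations in corollaries \ref{cor:U_Kpowers_general} and \ref{cor:powersConverse} and keeps the toolkit uniform. One small correction to your closing remark: the cancellation route needs no case distinction on whether the coefficient is a unit, since $\overline{a}^p-\overline{a}=0$ holds for every $\overline{a}\in\F_p$, including $\overline{a}=0$.
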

\begin{proof}
	The inclusion $U_F^{(p + 1)} \subseteq \bigl(U_F^{(1)}\bigr)^p$ was shown in Lemma~\ref{lem:U_Kpowers_general}. For the reverse, let $(1 + \pi b)^p \in \bigl(U_F^{(1)}\bigr)^p$ for $b \in \Oo_F$. Then the considerations in the proof of Lemma~\ref{lem:U_Kpowers_general} yield
	\[
	(1 + \pi b)^p \equiv 1 - \pi^p b + \pi^p b^p = 1 + \pi^p(b^p - b) \pmod{\pi^{p + 1}}.
	\]
	Since $\overline{F} = \IF_p$, we have $b^p \equiv b \pmod{\pi}$, and therefore
	\[
	\bigl(U_F^{(1)}\bigr)^p \subseteq U_F^{(p + 1)}. \qedhere
	\]
\end{proof}
The following result synthesises many of the considerations above, and it will play a central role in our proof of the local Kronecker--Weber theorem.

\begin{lemma}
	\label{lem:dimOfPowerQuotientWEAK}
	Let $F = \IQ_p(\zeta_p)$ and let $\pi$ be a uniformiser for $F$. Then the elements of $F^{\times}/F^{\times p}$ can be uniquely represented by
	\[
	a = p^{a_0}(1+a_1\pi + \cdots +a_p\pi^{p}),
	\]
	for some integers $0 \leq a_i \leq p-1$; in particular, $\dim_{\IF_p} F^{\times}/F^{\times p} = p+1$. 
	
	Moreover, for any $b\coloneq c \omega u$ with $c \in \pi^{\IZ}$, $\omega \in \mu_F'$ and $u \in U_{F}^{(1)}$,
	\[
	b \equiv a \hspace{-0.15cm}\pmod{F^{\times p}} \iff  c \equiv p^{a_0}\hspace{-0.15cm} \pmod{\pi^{p\IZ}}\,\land\, u\equiv 1+a_1\pi + \cdots +a_p\pi^{p} \hspace{-0.15cm} \pmod{\pi^{p+1}}.
	\]
	\end{lemma}
	\begin{proof}
		From Fact~\ref{fact:multgrp_direct_prod} and Lemma~\ref{lem:U_Kpowers}, we see that
		\begin{equation}
			\label{eq:F times mod F times p}
			F^\times/F^{\times p} \cong \pi^{\IZ}/\pi^{p \IZ}\times U_F^{(1)}/\big(U_F^{(1)}\big)^p = \pi^{\IZ}/\pi^{p \IZ}\times U_F^{(1)}/U_F^{(p+1)}.
		\end{equation}
		Now, $\{p\}$ induces a basis of $\pi^{\IZ}/\pi^{p\IZ}$, since
		\[
		p \equiv -\pi^{p-1} \pmod{\pi^p}
		\]
		by Lemma~\ref{lem:p/(1-zeta)^p}, and $\{-\pi^{p-1}\cdot\pi^{p\IZ}\}$ clearly forms a basis of $\pi^{\IZ}/\pi^{p\IZ}$. On the other hand, the elements of $U_F^{(1)}/U_F^{(p+1)}$ can be uniquely represented by elements
		\[
		1+ a_1\pi + \cdots + a_k \pi^k
		\]
		by Corollary~\ref{cor:representation}, where we can choose each $a_i$ as an integer with $0 \leq a_i \leq p -1$ because $F/\IQ_p$ is totally ramified by Lemma~\ref{lem:cycExt}. In particular, this means that $F^\times/F^{\times p}$ has cardinality $p^{p+1}$, which implies the claim about the dimension.
		
		Next, \eqref{eq:F times mod F times p} shows that 
		\begin{align*}
			b \equiv a \pmod{F^{\times p}} \iff ~c \equiv p^{a_0} \pmod{\pi^{p\IZ}} \land~ u \equiv 1+a_1\pi + \cdots + a_p\pi^{p} \pmod{U_F^{(p+1)}}.
		\end{align*}
		By Lemma~\ref{lem:U_Kpowers}, $U_F^{(p+1)}= \big(U_F^{(1)}\big)^p$
		so the second congruence is equivalent to
		\[
		u \equiv 1+a_1\pi + \cdots + a_p\pi^{p} \pmod{\pi^{p+1}}
		\]
		by Remark~\ref{rem:mod_rule}.
	\end{proof}
	
	\section{Proof of the local Kronecker--Weber theorem \label{sec:proof}}
	We shall prove the following explicit version of the local Kronecker--Weber theorem.
	
	\begin{theorem}[Local Kronecker--Weber theorem]
		\label{thm:myKW}
		Let $F/\IQ_p$ be an abelian extension of degree $n= p^l\cdot m$ with integers $l$ and $m$ such that $(p,m) = 1$. Then $F \subseteq \IQ_p(\zeta_{p^{l+2}}, \zeta_{p^{n}-1})$.
	\end{theorem}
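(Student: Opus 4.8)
The plan is to handle the tame and wild parts of $F$ separately. Decompose $\Gal{F/\Q_p}$ as a direct product $G_p\times G_{p'}$ of its $p$-Sylow subgroup (of order $p^n$) and its prime-to-$p$ part (of order $m$), and write $F=F_p\cdot F'$ accordingly, so that $[F_p:\Q_p]=p^n$ and $[F':\Q_p]=m$. Since $\mathrm{lcm}(p^m-1,\,p^{p^n}-1)$ divides $p^{p^n m}-1$, it is enough to show $F'\subseteq\Q_p(\zeta_{p^2},\zeta_{p^m-1})$ and $F_p\subseteq\Q_p(\zeta_{p^{n+1}},\zeta_{p^{p^n}-1})$ and then take the compositum. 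For $F'$ I would further split $\Gal{F'/\Q_p}$ into cyclic factors of prime-power order and treat each one; for $F_p$ no such reduction is used directly.

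For the tame part, let $F'/\Q_p$ be cyclic of degree $\ell^k$ with $\ell\neq p$, of ramification index $e$ and inertia degree $f$. The Frobenius–inertia relation (a Frobenius lift conjugates a generator of the tame inertia to its $p$-th power), together with commutativity of $\Gal{F'/\Q_p}$, forces $e\mid p-1$; hence $\mu_e\subseteq\Q_p$ by Corollary \ref{cor:rootsOfUnity}. The inertia subfield $F'_0$ is unramified, so $F'_0=\Q_p(\zeta_{p^f-1})$, and $F'/F'_0$ is totally tamely ramified of degree $e$, hence $F'=F'_0(\sqrt[e]{\pi_0})$ for a uniformizer $\pi_0=p\cdot u$. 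Using that adjoining an $e$-th root of a unit (with $\gcd(e,p)=1$) produces an unramified extension, one absorbs $u$ into an unramified, hence cyclotomic, extension of degree dividing $\ell^k$, while $\Q_p(\sqrt[e]{p})\subseteq\Q_p(\zeta_p)\subseteq\Q_p(\zeta_{p^2})$ by the same device applied to the degree-$e$ subfield of $\Q_p(\zeta_p)$. Tracking degrees yields $F'\subseteq\Q_p(\zeta_{p^2},\zeta_{p^{\ell^k}-1})$, and recombining the cyclic factors gives the bound for $F'$.

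For the wild part, assume $p$ odd (the case $p=2$, where $\Q_2(\zeta_2)=\Q_2$ collapses the construction, I would dispatch separately using $\dim_{\F_2}\Q_2^\times/\Q_2^{\times 2}=3$ from Corollary \ref{cor:dimOfPowerQuotient} and the explicit generators $-1$, $2$, and a unit). Set $K=\Q_p(\zeta_p)$ and let $\chi\colon\Gal{K/\Q_p}\xrightarrow{\sim}\F_p^\times$ be the mod-$p$ cyclotomic character. If $L/\Q_p$ is cyclic of degree $p$, then $L\cap K=\Q_p$, so $LK=K(\sqrt[p]{\delta})$ for some $\delta\in K^\times$; computing the conjugation action of $\Gal{K/\Q_p}$ on the Kummer pairing shows that $LK/\Q_p$ is abelian precisely when the class of $\delta$ lies in the $\chi$-isotypic component $(K^\times/K^{\times p})^{(\chi)}$, and conversely every line there descends to such an $L$. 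The crux is to compute this eigenspace: by Corollaries \ref{cor:dimOfPowerQuotient} and \ref{cor:basisK/Kp} the classes of $p,\,1+\pi,\,1+\pi^2,\dots,1+\pi^p$ (with $\pi=1-\zeta_p$) form an $\F_p$-basis of $K^\times/K^{\times p}$, and the automorphism $\zeta_p\mapsto\zeta_p^c$ sends $1+\pi^i$ to $1+c^i\pi^i+\cdots$, hence acts on this basis by a triangular matrix with diagonal $1,c,c^2,\dots,c^p$; since $\chi$ has order $p-1$, the character $\chi$ occurs exactly twice among these diagonal characters, so $\dim_{\F_p}(K^\times/K^{\times p})^{(\chi)}=2$. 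One generator is the class of $\zeta_p$ (giving $\Q_p(\zeta_{p^2})$), and the Artin–Schreier criterion of Corollary \ref{cor:powersConverse} identifies the other as the class giving the unramified degree-$p$ extension $\Q_p(\zeta_{p^p-1})$.

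Granting this, $\Q_p$ has exactly $p+1$ cyclic extensions of degree $p$, so its maximal elementary-abelian-$p$ extension has Galois group $(\Z/p)^2$; hence the maximal abelian $p$-extension $A$ of $\Q_p$ has a two-generated Galois group, and since the totally ramified cyclotomic tower $\bigcup_j\Q_p(\zeta_{p^j})$ and the unramified tower $\bigcup_j\Q_p(\zeta_{p^{p^j}-1})$ already contribute two independent copies of $\Z_p$, it follows that $\Gal{A/\Q_p}\cong\Z_p\times\Z_p$ with $A$ the compositum of those towers. As $F_p\subseteq A$ corresponds to a subgroup of index $p^n$ in $\Z_p\times\Z_p$, which contains $p^n(\Z_p\times\Z_p)$, we obtain $F_p\subseteq\Q_p(\zeta_{p^{n+1}})\cdot\Q_p(\zeta_{p^{p^n}-1})\subseteq\Q_p(\zeta_{p^{n+2}},\zeta_{p^{p^n m}-1})$, which together with the tame part finishes the proof. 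The one genuinely hard step is the eigenspace computation in the wild part — exactly the place where the structural results of Section 2 (Corollaries \ref{cor:dimOfPowerQuotient}, \ref{cor:basisK/Kp}, and \ref{cor:powersConverse}) are used — everything else being reduction, descent, and degree bookkeeping, apart from the self-contained handling of $p=2$.
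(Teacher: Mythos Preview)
Your overall architecture matches the paper's: Sylow decomposition into tame and wild parts, followed by a Frattini/rank argument in the wild case to identify the maximal abelian pro-$p$ extension with an explicit cyclotomic tower. The wild case for odd $p$ is essentially the paper's Lemma~\ref{lem:base_case}(ii) rephrased in representation-theoretic language: your triangular matrix with diagonal $c^0,c^1,\dots,c^p$ is exactly the content of Lemma~\ref{lem:observations}, and your appeal to semisimplicity (order prime to $p$) replaces the paper's explicit coefficient-matching to locate the two $\chi$-eigenvectors $\zeta_p$ and $1+a_p\pi^p$. Your formulation is cleaner and makes clearer \emph{why} the eigenspace is two-dimensional; the paper's hands-on version has the virtue of never invoking Maschke.

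Two points deserve comment. First, in the tame case you obtain $e\mid p-1$ from the Frobenius--inertia relation $\phi\tau\phi^{-1}=\tau^{p}$ in the tame quotient. That is correct and standard, but it is precisely the kind of structural input the paper avoids: Lemma~\ref{lem:mye|p-1} derives $e\mid p-1$ by an explicit construction (passing to the compositum with a large unramified extension and forcing $\zeta_{q^r}\in\Q_p$), staying within bare valuation theory plus Hensel. Your route is shorter; the paper's is more self-contained.

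Second, and more substantively, your treatment of $p=2$ is incomplete in a way that matters. Knowing $\dim_{\F_2}\Q_2^\times/\Q_2^{\times 2}=3$ only tells you that $\Gal{A/\Q_2}$ is a quotient of $\Z_2^3$, whereas the cyclotomic tower $\Q_2(\zeta_{2^\infty},\zeta_{2^{2^\infty}-1})$ has Galois group $C_2\times\Z_2\times\Z_2$ (Lemma~\ref{lem:maxcyclotomicp>2}(i)). To close the gap you must show that one of the three quadratic directions does \emph{not} extend to a $\Z_2$-tower; this is the content of Lemma~\ref{lem:Q_2(i)notInC_4} ($\Q_2(i)$ is not contained in any $C_4$-extension of $\Q_2$), proved in the paper via the explicit basis of $\Q_2(i)^\times/\Q_2(i)^{\times 2}$ in Lemma~\ref{lem:squaresQ_2(i)}. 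Your sketch (``dispatch separately using the explicit generators $-1,2$, and a unit'') does not flag this extra obstruction, and without it the sandwich argument $\Z_2^3\twoheadrightarrow\Gal{A/\Q_2}\twoheadrightarrow C_2\times\Z_2\times\Z_2$ fails to force equality.
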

	
	Since $F/\IQ_p$ is abelian, its Galois group
	$G=\Gal(F/\IQ_p)$ is finite abelian and hence (by the structure
	theorem for finite abelian groups) decomposes as 
	\[
	G = G_p \times G_{p'},
	\]
	where $G_p$ is the Sylow $p$-subgroup of $G$ and $G_{p'}$ is the product
	of the Sylow $l$-subgroups for all primes $l\neq p$. Let $L_p$ and
	$L_{p'}$ denote the fixed fields of $G_p$ and $G_{p'}$, respectively.
	By the Galois correspondence,
	\[
	[L_p:\IQ_p]=|G_{p'}|=m \qquad\text{and}\qquad
	[L_{p'}:\IQ_p]=|G_p|=p^l,
	\]
	since $L_p/\IQ_p$ has degree prime to $p$, while $L_{p'}/\IQ_p$ has $p$-power degree. Moreover, because $G$ is generated by $G_p$ and $G_{p'}$, we have
	\[
	L_p L_{p'} = F.
	\]
	Consequently, it suffices to prove the theorem in the prime-to-$p$
	and $p$-power cases separately: if both $L_p$ and $L_{p'}$ are contained
	in the cyclotomic extension appearing in Theorem~\ref{thm:myKW}, then so
	is $F$. Following standard terminology, we shall refer to the prime-to-$p$ case as the \emph{tame} case and to the $p$-power case as the \emph{wild} case.

	\subsection{The tame case}
	We first observe that in the notation of Theorem~\ref{thm:myKW}, $\IQ_p(\zeta_{p^m-1}) \subseteq \IQ_p(\zeta_{p^n-1})$ by Lemma~\ref{lem:cycExt} and the uniqueness of inertia fields. In particular,
	\[
	\IQ_p(\zeta_p,\zeta_{p^m-1}) \subseteq \IQ_p(\zeta_{p^{l+2}},\zeta_{p^n-1}).
	\]
	In order to establish Theorem~\ref{thm:myKW} in the tame case, it therefore suffices to show $L_p \subseteq \IQ_p(\zeta_p,\zeta_{p^m-1})$, which is the statement of the following lemma.
	
	\begin{lemma}
		\label{lem:tame KW}
		Let $F/\IQ_p$ be an abelian extension of degree $n$ such that $(n,p)=1$. Then $F \subseteq \IQ_p(\zeta_p, \zeta_{p^{n}-1})$.
	\end{lemma}
	\begin{proof}
		We briefly outline the strategy of the proof. The unramified part of $F/\mathbb{Q}_p$ is cyclotomic by Lemma~\ref{lem:cycExt} and the uniqueness of inertia fields, so the analysis will focus on the ramified part. We show that the ramification is entirely controlled by adjoining $\sqrt[e]{-p}$. In particular, $\mathbb{Q}_p(\sqrt[e]{-p})/\mathbb{Q}_p$ is an intermediate abelian extension, which forces $\zeta_e \in \mathbb{Q}_p$. By Lemma~\ref{lem:cycExt}, this implies $e \mid (p-1)$, giving an upper bound on the ramification index. Finally, we show that $\IQ_p(\sqrt[p-1]{-p}) = \mathbb{Q}_p(\zeta_p)$, which implies that the ramified part is cyclotomic as well.
		
		Let $F_0$ denote the inertia subfield of $F/\IQ_p$. By the uniqueness of inertia fields and Lemma~\ref{lem:cycExt}, $F_0 = \IQ_p(\zeta_{p^{f}-1})$. Let us also consider $E \coloneq \IQ_p(\zeta_{p^{n}-1})$, which is an unramified extension of $\IQ_p$ satisfying $[E:\IQ_p] =n$ and $[E:F_0] =e$ by the tower law and Lemma~\ref{lem:cycExt}.
		
		Let $\pi$ be a uniformiser of $F$. As $F$ is ramified of degree $e$, we have $v_F\left(\frac{\pi^{e}}{-p}\right) = 0$ so $\frac{\pi^{e}}{-p} \in \Oo_F^\times$. Then, since $\overline{F_0} = \overline{F}$, we may find $u  \in \Oo_{F_0}^\times$ such that $\overline{u} = \overline{\frac{~\pi^{e}}{-p}~}$. In particular, $\frac{\pi^{e}}{-pu} \in U_F^{(1)}$. Now, applying Hensel's Lemma to the polynomials $X^{e}-u'$ for any $u' \in U_F^{(1)}$ (note that $(e,p)=1$), we see that $U_F^{(1)} \leq F^{\times e}$. Hence, $\sqrt[e]{\frac{\pi^{e}}{-pu}} \in F$ and thus $\sqrt[e]{-pu} \in F$. In particular, 
		\[
		v_F\big(\sqrt[e]{-pu}\big) = \frac{1}{e}(v_F(p)+v_F(u))= \frac{1}{e},
		\]
		so we get $F = F_0(\sqrt[e]{-pu})$. 
		
		We now shift our focus to $E$. By the properties of finite fields, every element in $\overline{F_0}$ is an $e$-th power\footnote{Indeed, let $E/\IF_q$ be an extension of finite fields of degree $e$. Then $\IF_q^\times \cong C_{q-1}$, $E^{\times} \cong C_{q^e -1}$, and taking $e$-th powers in $E^{\times}$ induces a subgroup isomorphic to $C_{(q^e-1)/(q^e-1, e)}$. We need to show that the latter group contains $C_{q-1}$, which is equivalent to
			\[
			(q-1)\big | \frac{(q^e-1)}{(q^e-1, e)} \iff (q^e-1, e) \big | \frac{q^e-1}{q-1}=\sum_{i=0}^{e-1} q^i.
			\]
			If we let $d=(q-1,e)$, one sees that it suffices to show that $d$ divides the right-hand side, which is clearly the case since $q^i \equiv 1 \pmod{d}$ for all $0\leq i \leq e-1$.} in $\overline{E}$, so $u$ is an $e$-th power in $E$ by Hensel's Lemma. In particular, this implies that 
		\[
		EF = E(\sqrt[e]{-pu}) = E(\sqrt[e]{-p}).
		\]
		$EF/\IQ_p$ is abelian as the compositum of two abelian extensions, which means that the intermediate extension $\IQ_p(\sqrt[e]{-p})$ must be Galois. For this to be the case, we need $\zeta_{e} \in \IQ_p$. However, by Corollary~\ref{cor:rootsOfUnity}, this implies that $e \mid (p-1)$, and hence, $E(\sqrt[e]{-p}) \subseteq E(\sqrt[p-1]{-p})$. The following diagram gives an overview of the different extensions discussed so far:
		\begin{figure}[H]
			\centering
			\begin{tikzcd}
				&E(\sqrt[p-1]{-p})\arrow[d, dash]&\\
				&EF&\\
				E\arrow[ur, dash]&&\arrow[ul, dash]F \arrow[d, dash, "e"]\\
				&&F_0 \arrow[d, dash, "f"] \arrow[dash, llu, "e"']\\
				&&\IQ_p \arrow[uull, dash, "n"]
			\end{tikzcd}
		\end{figure}
		We claim $\IQ_p(\sqrt[p-1]{-p}) = \IQ_p(\zeta_p)$ from which the result will follow since then
		\[
		F \subseteq EF = E(\sqrt[e]{-p}) \subseteq E(\sqrt[p-1]{-p}) = \IQ_p(\zeta_{p^{n}-1})(\sqrt[p-1]{-p})= \IQ_p(\zeta_{p^{n}-1}, \zeta_p).
		\]
		To show that $\IQ_p(\sqrt[p-1]{-p}) = \IQ_p(\zeta_p)$, we first observe that if $w$ denotes the unique extension of $v_p$ to $\IQ_p(\sqrt[p-1]{-p})$, then $w\big(\sqrt[p-1]{-p}\big) = \frac{1}{p-1}$. It follows that
		\[
		[\IQ_p(\sqrt[p-1]{-p}) : \IQ_p] = p-1 = [\IQ_p(\zeta_p) : \IQ_p],
		\]
		where the second equality follows from Lemma~\ref{lem:cycExt}. Consequently, it suffices to show that $\IQ_p(\sqrt[p-1]{-p}) \subseteq \IQ_p(\zeta_p)$, or $\sqrt[p-1]{-p} \in \IQ_p(\zeta_p)$. We note that the polynomial
		\[
		X^{p-1}-\frac{-p}{(\zeta_p-1)^{p-1}}
		\]
		reduces to $X^{p-1}-1$ in the residue field $\IF_p$ of $\IQ_p(\zeta_p)$ by Lemma~\ref{lem:p/(1-zeta)^p}. Therefore,
		\[
		\sqrt[p-1]{\frac{-p}{(\zeta_p-1)^{p-1}}} \in \IQ_p(\zeta_p)
		\]
		by Hensel's Lemma, which implies $\sqrt[p-1]{-p} \in \IQ_p(\zeta_p)$ as required.
	\end{proof}
	
	\subsection{The wild case}
	The wild case is more involved. Our goal is to show that the (infinite) maximal abelian pro-$p$ extension of $\IQ_p$ is a cyclotomic extension; the precise calculation will then allow us to read off a relevant cyclotomic extension containing the given finite abelian extension of $\IQ_p$. In a first step, we determine an ``upper bound'' for the Galois group of this maximal abelian pro-$p$ extension of $\IQ_p$. As a base case, we compute the compositum of all $C_p$-extensions of $\IQ_p(\zeta_p)$ that are abelian over $\IQ_p$, and then extend the argument to the infinite case\footnote{We work over $\IQ_p(\zeta_p)$ to apply Kummer theory; this merely potentially enlarges the cyclotomic extensions.}. We proceed to show that the calculated Galois group actually appears as the Galois group of a cyclotomic extension. The concrete calculations will then allow us to explicitly describe for each finite abelian extension a cyclotomic extension containing it.
	
	For ease of notation, we set $K := \IQ_p(\zeta_p)$ and denote by $v$ the extension of $v_p$ to $K$. Observe that $K$ is just the trivial extension for $p=2$. For $p>2$, on the other hand, $K/\IQ_p$ is totally ramified of degree $p-1$ with uniformiser $\pi := \zeta_p-1$ and $\Gal(K/\IQ_p) \cong (\IZ/p\IZ)^\times$ by Lemma~\ref{lem:cycExt}. The latter group is cyclic as the multiplicative group of the field $\mathbb{F}_p$. We let $k$ be a generator of $(\IZ/p\IZ)^\times$, so $\Gal(K/\IQ_p) = \langle \sigma \rangle$, where $\sigma(\zeta_p) = \zeta_p^k$. Crucially, note that $k$ is a primitive root modulo $p$.
	
	Following the structure described above, the first step is the base case, i.e., classifying (the compositum of) the $C_p$-extensions of $K$ that are abelian over $\IQ_p$. The crucial ingredient is Kummer theory, which can be summarised as the following fundamental result.
	
	\begin{theorem}[Kummer theory]
		\label{thm:Kummer theory2}
		Let $F$ be a field with $\charK F \ne p$ and $\zeta_p \in F$. Then there exists a bijection
		\begin{align*}
			\{\text{one-dim.\,subspaces } \langle c \rangle F^{\times p} \le F^\times/F^{\times p}\} & \longleftrightarrow \{\text{$L/F$ Galois extensions of deg.\;$p$}\} \\
			\langle c \rangle F^{\times p} & \longmapsto F(\sqrt[p]{c})\\
			(F^\times \cap L^{\times p})/F^{\times p} & \longmapsfrom L.
		\end{align*}
	\end{theorem}
	
	Kummer theory thus tells us that the $C_p$-extensions of $K$ are in a one-to-one correspondence with the 1-dimensional $\IF_p$-subspaces of $K^\times/K^{\times p}$. When $p>2$, however, not all such extensions are necessarily abelian over $\IQ_p$. Determining which of these $C_p$-extensions remain abelian over $\IQ_p$ requires an analysis of the action of $\sigma$ on elements of the higher unit groups of $K$, as summarised in the next lemma.
	
	\begin{lemma}
		\label{lem:observations}
		Let $a \in \Oo_K$ be such that
		\[
		a \equiv 1 + a_j \pi^j\pmod{\pi^{j+1}}
		\]
		with $a_j \in \Oo_K^{\times}$ (so $a \in U_K^{(j)}$ but $a \notin U_K^{(j+1)}$). Then 
		\[
		\sigma(a) \equiv 1+ k^ja_j\pi^j \pmod{\pi^{j+1}}.
		\]
	\end{lemma}
	\begin{proof}
		First observe that 
		\[
		\frac{\sigma(\pi)}{\pi} = \frac{\sigma(\zeta_p -1)}{\zeta_p-1} = \frac{\zeta_p^k - 1}{\zeta_p-1} = \sum_{i=0}^{k-1}\zeta_p^i = \sum_{i=0}^{k-1}(\pi +1)^i \equiv k \pmod{ \pi},
		\]
		which implies
		\[
		\frac{\sigma(\pi^j)}{\pi^j} = \left(\frac{\sigma(\pi)}{\pi}\right)^j \equiv k^j \pmod{\pi},
		\]
		so
		\begin{equation}
			\label{eq:sigma pi j}
			\sigma\big(\pi^j\big) \equiv \pi^jk^j \pmod{\pi^{j+1}}.
		\end{equation}
		Similarly,
		\[
		\sigma(\pi^{j+1}) \equiv \pi^{j+1}k^{j+1} \pmod{\pi^{j+2}},
		\]
		which implies
		\begin{equation}
			\label{eq:sigma pi j+1}
			\sigma(\pi^{j+1}) \equiv 0 \pmod{ \pi^{j+1}}.
		\end{equation}
		The claim then follows by combining equations~\eqref{eq:sigma pi j} and \eqref{eq:sigma pi j+1}.
	\end{proof}
	
	We can now prove the following characterisation of the $C_p$-extensions of $K$ which are abelian over $\IQ_p$.
	\begin{lemma}
		\label{lem:base_case}
		The compositum $F$ of all $C_p$-extensions of $K$ which are abelian over $\IQ_p$ is
		\begin{itemize}
			\item[(i)] $\IQ_2(\sqrt{-1}, \sqrt{2}, \sqrt{5})$ with $\Gal\big(\IQ_2(\sqrt{-1}, \sqrt{2}, \sqrt{5})/\IQ_2\big) \cong C_2 \times C_2 \times C_2$ for $p=2$;
			\item[(ii)] $\IQ_p(\zeta_{p^2}, \zeta_{p^p -1})$ with $\Gal(\IQ_p(\zeta_{p^2}, \zeta_{p^p -1})/K) \cong C_p \times C_p$ for $p>2$.
		\end{itemize}
	\end{lemma}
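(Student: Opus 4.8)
The plan is to pass to Kummer theory over $K=\Q_p(\zeta_p)$, single out those Kummer elements whose $C_p$-extension is abelian over $\Q_p$ (not merely cyclic over $K$), and then identify the resulting compositum by a dimension/degree count. By Kummer theory the $C_p$-extensions of $K$ correspond to the one-dimensional $\F_p$-subspaces $\langle\overline{d}\rangle$ of $V:=K^\times/K^{\times p}$ via $F=K(\sqrt[p]{d})$. Since $K/\Q_p$ is Galois, such an $F$ is Galois over $\Q_p$ exactly when $\langle\overline{d}\rangle$ is $\sigma$-stable, say $\sigma(\overline{d})=\overline{d}^{\,j}$ with $j\in\F_p^\times$; and a short computation with the Kummer pairing (using $\sigma(\zeta_p)=\zeta_p^k$) shows that any lift of $\sigma$ then conjugates a generator $\tau$ of $\Gal{F/K}$ to $\tau^{kj^{-1}}$, so $F/\Q_p$ is abelian if and only if $j=k$. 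Hence the extensions whose compositum we want are precisely the $K(\sqrt[p]{d})$ with $\overline{d}$ in the eigenspace $V_k:=\{v\in V:\sigma v=kv\}$, so the compositum is $M:=K(V_k^{1/p})$, of degree $p^{\dim_{\F_p}V_k}$ over $K$ with $\Gal{M/K}\cong(\Z/p\Z)^{\dim_{\F_p}V_k}$. For $p=2$ this finishes the argument: $\sigma=\mathrm{id}$, so $V_k=V=\Q_2^\times/\Q_2^{\times2}$, which has $\F_2$-dimension $3$ by Corollary~\ref{cor:dimOfPowerQuotient} with basis $\{-1,2,5\}$ (e.g.\ since $\Z_2^{\times2}=1+8\Z_2$), every quadratic extension of $\Q_2$ is automatically abelian, and $M=\Q_2(\sqrt{-1},\sqrt{2},\sqrt{5})$ with $\Gal{M/\Q_2}\cong C_2\times C_2\times C_2$.

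The substance is the case $p>2$, where I must show $\dim_{\F_p}V_k=2$. As $\gcd(p-1,p)=1$, the group algebra $\F_p[\Gal{K/\Q_p}]\cong\F_p[X]/(X^{p-1}-1)\cong\prod_{a\in\F_p^\times}\F_p$ is semisimple, so the dimension of the $a$-eigenspace is additive along $\sigma$-stable filtrations. By Corollary~\ref{cor:basisK/Kp}, $V=\F_p\overline{p}\oplus U_K^{(1)}/(U_K^{(1)})^p$ as $\sigma$-modules ($\sigma$ fixes $\overline{p}$ because $p\in\Q_p$); moreover $U_K^{(1)}/(U_K^{(1)})^p=U_K^{(1)}/U_K^{(p+1)}$ by Lemma~\ref{lem:U_Kpowers}, and this carries the $\sigma$-stable filtration by the subgroups $U_K^{(i)}/U_K^{(p+1)}$ whose successive quotients $U_K^{(i)}/U_K^{(i+1)}\cong\overline{K}$ ($1\le i\le p$) each have $\sigma$ acting as multiplication by $k^i\bmod p$, by Lemma~\ref{lem:observations}. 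Since $k$ is a primitive root modulo $p$ and $k^p\equiv k$, among $i=1,\dots,p$ the value $k$ is attained exactly for $i\in\{1,p\}$, the value $1$ exactly for $i=p-1$, and every other element of $\F_p^\times$ exactly once; additivity therefore gives $\dim_{\F_p}V_k=2$ (the $\overline{p}$-summand sits in the $1$-eigenspace and contributes nothing), hence $[M:K]=p^2$ and $\Gal{M/K}\cong C_p\times C_p$. I expect this eigenspace count to be the main obstacle — in particular getting the abelianness condition as $j=k$ and not $j=k^{-1}$, and justifying that the multiplicities may be read off the filtration piece by piece; note that a soft upper bound $\dim_{\F_p}V_k\le2$ from a structural description of the abelian extensions of $\Q_p$ is not available here, since that is essentially local class field theory.

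It remains to identify $M$ with $\Q_p(\zeta_{p^2},\zeta_{p^p-1})$ for $p>2$. The field $\Q_p(\zeta_{p^2})$ contains $\zeta_p$, is of degree $p$ over $K$, and is abelian over $\Q_p$ (its Galois group being $(\Z/p^2\Z)^\times$), so it is one of our extensions and lies in $M$; likewise $K\cdot\Q_p(\zeta_{p^p-1})$ is unramified of degree $p$ over $K$ — by linear disjointness of the totally ramified $K/\Q_p$ and the unramified $\Q_p(\zeta_{p^p-1})/\Q_p$, both described in Lemma~\ref{lem:cycExt} — and is abelian over $\Q_p$ as a compositum of abelian extensions, so it too lies in $M$. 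Hence $\Q_p(\zeta_{p^2},\zeta_{p^p-1})=\Q_p(\zeta_{p^2},\zeta_{p^p-1},\zeta_p)\subseteq M$, and comparing degrees $[\Q_p(\zeta_{p^2},\zeta_{p^p-1}):\Q_p]=p(p-1)\cdot p=p^2(p-1)=[M:\Q_p]$ by the same disjointness, which forces $M=\Q_p(\zeta_{p^2},\zeta_{p^p-1})$; the asserted Galois group is then obtained from the two linearly disjoint $C_p$-factors over $K$, giving $\Gal{M/K}\cong\Gal{\Q_p(\zeta_{p^2})/K}\times\Gal{K\cdot\Q_p(\zeta_{p^p-1})/K}\cong C_p\times C_p$ (which is the $p$-part of $\Gal{M/\Q_p}$).
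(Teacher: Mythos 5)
Your proof is correct, and in the substantial case $p>2$ it takes a genuinely different route from the paper's. You convert abelianity into an eigenvalue condition: $\sigma$ must act on the Kummer line by the mod-$p$ cyclotomic character, and your conjugation computation $\tilde\sigma\tau\tilde\sigma^{-1}=\tau^{kj^{-1}}$ (hence ``abelian iff $j=k$'', with the direction right) is the standard one. You then get $\dim_{\F_p}V_k=2$ from semisimplicity of $\F_p[\Gal{K/\Q_p}]$ (Maschke, since $p\nmid p-1$) together with additivity of eigenspace dimensions along the $\sigma$-stable filtration of $U_K^{(1)}/U_K^{(p+1)}$, whose graded pieces carry the characters $k^i$ by Lemma \ref{lem:observations}, and you identify the compositum by exhibiting the two cyclotomic $C_p$-extensions and comparing degrees. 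The paper never isolates the criterion $j=k$ as such: it fixes the explicit representative $u=p^{a_0}\bigl(1+\sum_i a_i\pi^i\bigr)$ from Corollary \ref{cor:basisK/Kp} and eliminates coefficients case by case --- $a_0=0$ via Corollary \ref{cor:powersConverse} (detecting that $K(\sqrt[p]{1+a_p\pi^p})/K$ is unramified) and the non-Galoisness of $\Q_p(\sqrt[p]{p^{a_0}(1+lp)})$, then forces $r=k$ through the auxiliary product $uu'$ and finishes with a coefficient-comparison recursion. What your route buys is conceptual economy and robustness: the eigenspace bookkeeping automatically accounts for the fact that at level $i=p$ the character $k^p=k$ coincides with $k$, so $a_p$ is \emph{not} determined by $a_1$ --- exactly the point where the paper's final recursion is delicate (a class like $\zeta_p(1+\pi^p)$ has the same $a_1$ as $\zeta_p$), and where your argument needs no case analysis at all. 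What the paper's route buys is that it works entirely with explicit elements and never invokes the character decomposition of the group algebra, staying closer to its advertised elementary toolkit; the inputs (Lemmas \ref{lem:representation}, \ref{lem:U_Kpowers}, \ref{lem:observations}, Corollaries \ref{cor:dimOfPowerQuotient}, \ref{cor:basisK/Kp}) are the same in both. For $p=2$ your argument and the paper's coincide (basis $\{-1,2,5\}$ of $\Q_2^\times/\Q_2^{\times2}$). One cosmetic remark you already flag: in part (ii) the group $C_p\times C_p$ is the Galois group over $K$, not over $\Q_p$ (over $\Q_p$ there is an extra $C_{p-1}$ factor); the paper's own proof likewise only computes it over $K$.
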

	\begin{proof}    
		\emph{(i).} By Kummer theory (Theorem \ref{thm:Kummer theory2}), it is enough to show that $\{-1,2,5\}$ represents a basis of $\IQ_2^\times/\IQ_2^{\times 2}$ as an $\IF_2$-vector space. As $\pi =2$ is a uniformiser for $\IQ_2$, Lemma~\ref{lem:dimOfPowerQuotientWEAK} implies that it suffices to show that $\{-1,5\}$ generates all elements of the shape 
		\[
		y = 1 + 2a_1 + 4a_2
		\] 
		for $a_1, a_2 \in \{0,1\}$ modulo $\IQ_2^{\times 2}$. So let us consider the four different cases for $(a_1, a_2)$.
		\begin{itemize}
			\item $(a_1, a_2) = (0,0)$. Then $y = 1 = (-1)^0 \cdot 5^0$.
			\item $(a_1, a_2) = (0,1)$. Then $y = 5 = (-1)^0 \cdot 5^1$.
			\item $(a_1, a_2) = (1,0)$. Then $y = 3 \equiv -5 = (-1)^1 \cdot 5^1 \pmod{\IQ_2^{\times 2}}$.
			\item $(a_1, a_2) = (1,1)$. Then $y = 7 \equiv -1 = (-1)^1\cdot 5^0 \pmod{\IQ_2^{\times 2}}$. 
		\end{itemize}
		In the last two cases, we used Lemma~\ref{lem:dimOfPowerQuotientWEAK} to reduce modulo $\pi^3 =8$.
		
		\emph{(ii)} We first observe that $\IQ_p(\zeta_{p^2})$ and $\IQ_p(\zeta_{p^p -1})$ are $C_p$-extensions of $K$ that are abelian over $\IQ_p$ by Lemma~\ref{lem:cycExt}, which implies that $F$ contains $\IQ_p(\zeta_{p^2}, \zeta_{p^p -1})$. Lemma~\ref{lem:cycExt} further tells us that $K(\zeta_{p^2})/K$ is totally ramified while $K(\zeta_{p^p-1})/K$ is unramified. This means that both subextensions are linearly disjoint over $K$, which implies $\Gal(K(\zeta_{p^2},\zeta_{p^p-1})/K) \cong C_p \times C_p$. 
		
		It remains to see that any $C_p$-extensions $L/K$ with $L/\IQ_p$ abelian is contained in $\IQ_p(\zeta_{p^2}, \zeta_{p^p -1})$. By Kummer theory, $L = K\big(\sqrt[p]{u}\big)$ for some $u \in K^\times$, where we may represent $u$ as
		\[
		u = p^{a_0}\left(1 + a_1\pi + \cdots +a_p\pi^p\right)
		\]
		with $a_i \in \{0, \dots, p-1\}$ by Lemma~\ref{lem:dimOfPowerQuotientWEAK}.
		
		\textit{Claim. $a_0 = 0$.}
		
		\begin{claimproof}{}
			Let us assume $a_0 \neq 0$. On the one hand,
			\begin{equation*}
				\sigma(u) = p^{a_0}\sigma\left(1 + a_1\pi + \cdots +a_p\pi^p\right).
			\end{equation*}
			On the other hand, as $K\big(\sqrt[p]{u}\big) = K\big(\sqrt[p]{\sigma(u)}\big)$, we have $\sigma(u) \in \langle u \rangle K^{\times p}$ by Kummer theory, say $\sigma(u) \in u^r K^{\times p}$ with $r \in \{1,...,p-1\}$. Hence,
			\[
			\sigma(u) \equiv u^r  = p^{r a_0}\left(1 + a_1\pi + \cdots +a_p\pi^p\right)^r \pmod{K^{\times p}}.
			\]
			Comparing the last two equations and using Lemma~\ref{lem:dimOfPowerQuotientWEAK}, we see $r a_0 \equiv a_0 \pmod{p}$, which implies $r=1$ since $a_0 \neq 0$. In particular, Lemma~\ref{lem:dimOfPowerQuotientWEAK} then further implies
			\[
			\sigma(1 + a_1\pi + \cdots +a_p\pi^p) \equiv 1 + a_1\pi + \cdots +a_p\pi^p \pmod{\pi^{p+1}}.
			\]
			Now let $j \in \{1, \dots, p\}$ be minimal such that $a_j \neq 0$. Combining the previous equation with Lemma~\ref{lem:observations} then implies that 
			\[
			a_j k^j\pi^j \equiv a_j\pi^j \pmod{ \pi^{j+1}},
			\] 
			which is equivalent to $\overline{a_j} = \overline{a_jk^j}$. However, as $k$ is a primitive root modulo $p$, this is impossible for $j < p-1$, so we may write
			\[
			u = p^{a_0}(1+ a_{p-1} \pi^{p-1}+a_p\pi^p).
			\]
			Next, we observe that we can factorise
			\[
			u  = p^{a_0}(1+ a_{p-1} \pi^{p-1}+a_p\pi^p) \equiv p^{a_0}(1+lp)(1+a_p' \pi^p) \pmod{K^{\times p}}
			\]
			for suitable $l,a_p' \in \{0, \dots, p-1\}$. Indeed, using Lemma~\ref{lem:p/(1-zeta)^p} to write $p = - \pi^{p-1} + x\pi^p$ for some $x\in \Oo_K$, we get
			\[
			(1+lp)(1+a_p' \pi^p) \equiv 1 -l\pi^{p-1} + (lx + a_p')\pi^p \pmod{\pi^{p+1}},
			\]
			so choosing $l,a_p' \in \{0, \dots, p-1\}$ such that $\overline{l} = \overline{-a_{p-1}}$ and $\overline{a_p'} = \overline{a_p-lx}$, the claim follows from Lemma~\ref{lem:dimOfPowerQuotientWEAK}.
			
			Let us assume we could show that $K\big(\sqrt[p]{1 + a_p' \pi^p}\big) \subseteq K(\zeta_{p^p-1})$. Then the last equation shows that the extension $K\big(\zeta_{p^p-1}, \sqrt[p]{p^{a_0}(1 +lp)}\big)/\IQ_p$ is the compositum of the two abelian extensions $\IQ_p(\zeta_{p^p-1})/\IQ_p$ and $L/\IQ_p$, and is therefore itself abelian. In particular, the intermediate extension $\IQ_p\big(\sqrt[p]{p^{a_0}(1 +lp)}\big)/\IQ_p$ is Galois. However, as $\IQ_p$ does not contain the $p$-th roots of unity by Corollary~\ref{cor:rootsOfUnity}, this is impossible when $a_0\neq 0$, contradicting our assumption.
			
			The last paragraph shows that in order to complete the proof of $a_0 =0$, it suffices to show that $K\big(\sqrt[p]{1 + a_p' \pi^p}\big) \subseteq K(\zeta_{p^p-1})$. This is clearly the case for $a_p' = 0$, so let us assume $a_p' \neq 0$ and denote $u' = 1 + a_p'\pi^p \in U_K^{(p)}$. Then $u' \notin K^{\times p}$ by Lemma~\ref{lem:dimOfPowerQuotientWEAK}, so $K\big(\sqrt[p]{u'}\big)/K$ is a $C_p$-extension by Kummer theory. 
			In particular, $u'$ is a $p$-th power in $K\big(\sqrt[p]{u'}\big)$, so Lemma~\ref{lem:powersConverse} tells us that we may find $\overline{x}\in \overline{K\big(\sqrt[p]{u'}\big)}$ such that $\overline{x}^p - \overline{x} - \overline{a_p'} = 0$. However, this would be impossible for $\overline{a_p'} \neq 0$ if we had $\overline{K} = \IF_p$, so we conclude that the extension $K(\sqrt[p]{u'})/K$ must give rise to a (degree-$p$) extension of residue fields. In particular, $K\big(\sqrt[p]{u'}\big)/K$ is unramified, so $K\big(\sqrt[p]{u'}\big)= K(\zeta_{p^p-1})$ by Lemma~\ref{lem:cycExt}, as required. This concludes the proof that $a_0 = 0$.
		\end{claimproof}
		
		To complete the proof of Lemma~\ref{lem:base_case}, we calculate all possible values for the remaining $a_i$ and show that in each case, $L\subseteq F$. Let $j \in \{1, \dots, p\}$ be minimal such that $a_j \neq 0$. 
		
		We first consider the case $j =p$, in which case $u=1+ a_p \pi^p$, so the argument in the last paragraph shows that
		\[
		L = \IQ_p(\zeta_p, \sqrt[p]{u}) = \IQ_p(\zeta_p, \zeta_{p^p-1}) \subseteq F
		\]
		for any $a_p \in \{1,\dots, p-1\}$. In particular, this shows that the extension $\IQ_p(\zeta_p, \zeta_{p^p-1})/K$ corresponds to the set $\{(1+ a_p \pi^p)K^{\times p}:a_p\in\{0,\dots, p-1\}\}$ under the Kummer correspondence.
		
		If $j < p$, the previous paragraph and Lemma~\ref{lem:dimOfPowerQuotientWEAK} show that $u\notin \langle u' \rangle K^{\times p}$ for any $u' = 1+ a_p' \pi^p$ with $a_p' \in \{1,\dots, p-1\}$. Using again that $\IQ_p(\zeta_p, \sqrt[p]{u'}) = \IQ_p(\zeta_p, \zeta_{p^p-1})$, we observe that $K\big(\sqrt[p]{uu'}\big)/\IQ_p$ is a subextension of the compositum of the two abelian extensions $L/\IQ_p$ and $\IQ_p(\zeta_{p^p-1})/\IQ_p$, and hence is itself abelian. In particular, $K\big(\sqrt[p]{uu'}\big) = K\big(\sqrt[p]{\sigma(uu')}\big)$ and hence $\sigma(uu') \in \langle uu'\rangle K^{\times p}$ by Kummer theory, so there exists an $l \in \{0,\dots, p-1\}$ such that $\sigma(uu') \in (uu')^lK^{\times p}$.
		
		Using the identity $\sigma(uu') = \sigma(u) \sigma(u')$, we now find a different expression for the domain of $\sigma(uu')$. As we have already observed in the argument for $a_0 =0$, there is an $r\in\{0,\dots,p-1\}$ such that $\sigma(u) \in u^rK^{\times p}$. For the domain of $\sigma(u')$, we use Lemma~\ref{lem:observations} to observe that
		\[
		\sigma(u') = \sigma(1 + a_p'\pi^p) \equiv 1 + k^p a_p' \pi^p \equiv 1+ ka_p'\pi^p \equiv (1+a_p'\pi^p)^k = (u')^k\pmod{\pi^{p+1}},
		\]
		so 
		\[
		\sigma(u') \equiv  (u')^k \pmod{K^{\times p}}
		\]
		by Lemma~\ref{lem:dimOfPowerQuotientWEAK}.
		Hence,
		\[
		\sigma(uu') \in  (uu')^lK^{\times p} \cap u^r\left(u'\right)^kK^{\times p} ,
		\]
		which forces $l = r = k$, so $\sigma(u) \in u^k K^{\times p}$. By Lemma~\ref{lem:dimOfPowerQuotientWEAK} and expanding out, we see that 
		\[
		\sigma(u) \equiv u^k =(1 + a_j\pi^j + \cdots + a_p\pi^p )^k \equiv 1+ka_j\pi^j \pmod{\pi^{j+1}}.
		\]
		On the other hand,
		\[
		\sigma(u) \equiv k^ja_j\pi^j \pmod{\pi^{j+1}}.
		\]
		by Lemma~\ref{lem:observations}. Since $j<p$ and $k$ is a primitive root modulo $p$, this implies $j = 1$, so $a_1 \neq 0$. 
		
		We claim that the value of $a_1$ determines the value of the other $a_i$ for $i< p$. Indeed, on the one hand, expanding out $u^k$ once more, we see that
		\[
		\sigma(u)\equiv (1 + a_1\pi + \cdots + a_p\pi^p )^k \equiv 1+ ka_1\pi+\sum_{i=2}^p (ka_i+f_i(a_1, \dots, a_{i-1}))\pi^i \pmod{\pi^{p+1}},
		\]
		where $f_i \in \IZ[X_1, \dots, X_{i-1}]$ for each $i$. On the other hand, writing $\sigma(\pi) = k\pi + b\pi^2$ for some $b \in \Oo_K^{\times}$, we get
		\[
		\sigma(u) = 1 + \sum_{i = 1}^{p} a_i ( k\pi + b\pi^2)^i \equiv 1 + ka_1\pi +\sum_{i=2}^p (a_ik^i + g_i(a_1, \dots, a_{i-1};b))\pi^i \pmod{\pi^{p+1}},
		\]
		where $g_i \in \IZ[X_1, \dots, X_{i-1}; Y]$. Recursively reducing modulo $\pi^{i+1}$ for $i \in \{2, \dots, p-1\}$, this implies
		\[
		\overline{a_ik^i + g_i(a_1, \dots, a_{i-1};b)} = \overline{ka_i+ f_i(a_1,\dots, a_{i-1})},
		\]
		giving rise to unique values $a_i \in \{0,\dots, p-1\}$ for each $i$ as $k$ is a primitive root modulo $p$. 
		
		For $a_p$, there are $p$ possible different values, which correspond to multiplying $u$ with $u' = 1 +a_p'\pi^p$ for different values of $a_p'$. In particular, modulo these $u'$, there are at most $p$ distinct values of $u$, corresponding to the $p$ different values of $a_1$. As $\sigma(\zeta_p^i) = (\zeta_p^i)^k$ and $\zeta_p = \pi+1$, we see that these $p$ values correspond precisely to the values $\zeta_p^i$ modulo $K^{\times p}$ for $i \in \{1, \dots p-1\}$. Thus, we get that 
		\[
		F \subseteq K\big(\sqrt[p]{\zeta_p u'}\big) \subseteq K\big(\sqrt[p]{\zeta_p}, \sqrt[p]{u'}\big)\subseteq \IQ_p(\zeta_{p^2}, \zeta_{p^p-1})
		\]
		as claimed.
	\end{proof}
	
	From this base case, we can now conclude the infinite case. While this is straightforward for $p>2$, the case $p=2$ requires the following slightly technical lemma:
	
	\begin{lemma}
		\label{lem:Q_2(i)notInC_4}
		$\IQ_2(i)$ is not contained in a $C_4$-extension of $\IQ_2$.
	\end{lemma}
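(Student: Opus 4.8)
The plan is to argue by contradiction: suppose $\Q_2(i) \subseteq M$ for some $C_4$-extension $M/\Q_2$. The key invariant to exploit is the structure of $M^\times/M^{\times 2}$ together with the way $\sqrt{-1}$ sits inside $M$, much as in the proof of Lemma \ref{lem:base_case}(i). Since $[M:\Q_2] = 4$, Corollary \ref{cor:dimOfPowerQuotient} (applied with $F = M$, noting $\zeta_2 = -1 \in \Q_2 \subseteq M$) gives $\dim_{\F_2} M^\times/M^{\times 2} = 6$. The unique index-$2$ subfield of $M$ is $\Q_2(i)$, since $\Gal{M/\Q_2} \cong C_4$ has a unique subgroup of order $2$.

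First I would identify, via Kummer theory over $\Q_2$, which quadratic extension $\Q_2(i)$ is: by Lemma \ref{lem:base_case}(i) the quadratic extensions of $\Q_2$ correspond to the nonzero classes in the $\F_2$-space spanned by $-1, 2, 5$, and $\Q_2(i) = \Q_2(\sqrt{-1})$ corresponds to the class of $-1$. Now a $C_4$-extension $M/\Q_2$ containing $\Q_2(\sqrt{d})$ exists if and only if $d$ is a norm from $\Q_2(\sqrt{d})$ — equivalently, the quadratic form condition that $\sqrt{d}$ becomes a square after adjoining a square root of an element of norm $d$. The cleanest elementary route: $M = \Q_2(i)(\sqrt{\alpha})$ for some $\alpha \in \Q_2(i)^\times$, and $M/\Q_2$ is Galois with group $C_4$ (rather than $C_2 \times C_2$) exactly when $\sigma(\alpha)/\alpha \notin \Q_2(i)^{\times 2}$ where $\sigma$ generates $\Gal{\Q_2(i)/\Q_2}$, while $M/\Q_2$ being Galois at all forces $\sigma(\alpha)\alpha \in -N \cdot \Q_2(i)^{\times 2}$ type relations; spelling this out, $C_4$-ness forces $N_{\Q_2(i)/\Q_2}(\alpha) \equiv -1 \pmod{\Q_2^{\times 2}}$, i.e. $-1$ must be a norm from $\Q_2(i)$.

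So the crux reduces to a concrete computation: \emph{$-1$ is not a norm from $\Q_2(i)^\times$ to $\Q_2^\times$}. I would verify this by noting $N_{\Q_2(i)/\Q_2}(a+bi) = a^2 + b^2$ and checking that $a^2 + b^2 = -1$ has no solution in $\Q_2$ — scaling to $\O_{\Q_2} = \Z_2$ one reduces mod $8$ (or mod $4$), where $a^2 + b^2 \in \{0,1,2,4,5\} \pmod 8$ never hits $-1 \equiv 7$; handling the valuation/scaling step uses that $v_2$ of a sum of two squares is controlled since $i \notin \Q_2$ means $\Q_2(i)/\Q_2$ is the unramified quadratic extension, so the norm is surjective onto units-times-even-powers only, and $-1$ is a unit of the wrong square class. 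Concluding, no such $\alpha$ exists, so no $C_4$-extension of $\Q_2$ contains $\Q_2(i)$.

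The main obstacle is setting up the $C_4$-versus-$C_2\times C_2$ dichotomy cleanly without invoking cohomology or class field theory: one must argue directly that if $M = \Q_2(i)(\sqrt{\alpha})/\Q_2$ is Galois then $\Gal{M/\Q_2}$ is $C_4$ iff a specific norm condition holds, which is a small hands-on Galois-theory lemma (extend $\sigma$ to $M$, compute $\sigma^2$ on $\sqrt{\alpha}$, and see when it has order $4$). Once that dichotomy is in place the arithmetic input — $-1 \notin N_{\Q_2(i)/\Q_2}(\Q_2(i)^\times)$ — is an elementary $2$-adic square-class computation. I expect the Galois bookkeeping, not the number theory, to be where care is needed; everything else is in the spirit of the earlier lemmas in this section.
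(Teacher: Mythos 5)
Your argument is correct in substance but follows a genuinely different route from the paper's. The paper first computes an explicit $\F_2$-basis $(1+i,1+2i,2,5)$ of $\Q_2(i)^\times/\Q_2(i)^{\times 2}$ (lemma \ref{lem:squaresQ_2(i)}) and then shows by a square-class computation that any quartic Galois extension of $\Q_2$ containing $\Q_2(i)$ is $\Q_2(i,\sqrt{x})$ with $x\in\{2,5,10\}$, hence has a second quadratic subfield and is biquadratic rather than cyclic. You instead use the classical obstruction to embedding a quadratic extension in a $C_4$-extension: writing $M=\Q_2(i)(\sqrt{\alpha})$ and letting $\sigma$ generate $\Gal{\Q_2(i)/\Q_2}$, Galoisness forces $\sigma(\alpha)=\alpha\beta^2$ for some $\beta$, any extension $\tilde{\sigma}$ of $\sigma$ satisfies $\tilde{\sigma}^2(\sqrt{\alpha})=N_{\Q_2(i)/\Q_2}(\beta)\sqrt{\alpha}$, and cyclicity forces $N_{\Q_2(i)/\Q_2}(\beta)=-1$; so everything reduces to showing $-1$ is not a sum of two squares in $\Q_2$. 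Both halves are sound and elementary (the ``Galois bookkeeping'' you flag is exactly the short computation just indicated), and your route bypasses lemma \ref{lem:squaresQ_2(i)} entirely, replacing it by a mod-$8$ computation; the paper's route, in exchange, reuses machinery already in place (corollary \ref{cor:dimOfPowerQuotient}, lemma \ref{lem:U_Kpowers}) and yields the sharper structural conclusion about which quadratic subfields must appear.

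One factual slip to repair: $\Q_2(i)=\Q_2(\zeta_4)$ is \emph{totally ramified} over $\Q_2$ (lemma \ref{lem:cycExt} with $N=2^2$), not the unramified quadratic extension (that is $\Q_2(\zeta_3)=\Q_2(\sqrt{-3})=\Q_2(\sqrt{5})$); moreover, were it unramified, every unit --- in particular $-1$ --- would be a norm, so the parenthetical you offer would point the wrong way. Fortunately it is not needed: if $a,b\in\Z_2$ are not both divisible by $2$, then $a^2+b^2\equiv 1\pmod{4}$ or $a^2+b^2\equiv 2\pmod{8}$, so $v_2(a^2+b^2)\le 1$, which rules out non-integral solutions of $a^2+b^2=-1$ after clearing denominators, while for $a,b\in\Z_2$ the congruence $a^2+b^2\equiv 7\pmod{8}$ is impossible since squares are $0,1,4$ modulo $8$.
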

	
	This lemma at hand, we may then deduce:
	
	\begin{corollary}
		\label{cor:upperBound}
		Let $\tilde{F}$ be the unique maximal pro-$p$ extension of $K$ that is abelian over $\IQ_p$. Then $\Gal(\tilde{F}/K)$ 
		\begin{itemize}
			\item[(i)] is a quotient of $C_2\times \IZ_2 \times \IZ_2$ for $p=2$, and
			\item[(ii)] is a quotient of $\IZ_p \times \IZ_p$ for $p>2$.
		\end{itemize}
	\end{corollary}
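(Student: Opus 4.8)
The plan is to pin down $G := \Gal{\tilde F/K}$ through its Frattini quotient. Since $\tilde F/\Q_p$ is abelian, so is $\tilde F/K$, hence $G$ is an abelian pro-$p$ group, and for such a group the minimal number of topological generators equals $\dim_{\F_p}(G/\overline{G^p})$. The crucial observation is that the compositum $M$ of all degree-$p$ subextensions of $\tilde F/K$ coincides with the compositum of all $C_p$-extensions $F/K$ with $F/\Q_p$ abelian: any such $F$ is a finite $p$-extension of $K$ that is abelian over $\Q_p$, hence lies in $\tilde F$ by maximality, and conversely every degree-$p$ subextension of $\tilde F/K$ is cyclic of degree $p$ over $K$ and, lying inside the abelian extension $\tilde F/\Q_p$, is abelian over $\Q_p$. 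As $G$ is abelian, its Frattini subgroup is $\overline{G^p}$ and $M$ is its fixed field, so $\Gal{M/K} \cong G/\overline{G^p}$.

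I would then read $\dim_{\F_p}(G/\overline{G^p})$ off from Lemma \ref{lem:base_case}. For $p>2$ that lemma gives $M = \Q_p(\zeta_{p^2},\zeta_{p^p-1})$, and — using the linear disjointness of the unramified part $K(\zeta_{p^p-1})$ and the totally ramified part $K(\zeta_{p^2})$ that is established in its proof — $\Gal{M/K} \cong C_p \times C_p$; hence $G$ is a $2$-generated abelian pro-$p$ group, i.e.\ a continuous quotient of the free abelian pro-$p$ group $\Z_p \times \Z_p$, which is (ii). For $p=2$ we have $K = \Q_2$, and Lemma \ref{lem:base_case} gives $\Gal{M/\Q_2} \cong C_2 \times C_2 \times C_2$, so $G$ is $3$-generated, and at this point we only obtain that $G$ is a continuous quotient of $\Z_2 \times \Z_2 \times \Z_2$.

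To sharpen this to (i) I would bring in Lemma \ref{lem:Q_2(i)notInC_4}. By the structure theorem for topologically finitely generated abelian pro-$2$ groups we may write $G \cong \Z/2^{a_1}\Z \times \Z/2^{a_2}\Z \times \Z/2^{a_3}\Z$ with $a_i \in \{1,2,\dots\}\cup\{\infty\}$ (with the convention $\Z/2^{\infty}\Z = \Z_2$), and it is enough to show that some $a_i = 1$, for then $G$ is visibly a quotient of $C_2 \times \Z_2 \times \Z_2$. Suppose instead that all $a_i \ge 2$. Reducing each factor modulo $4$ yields a surjection $G \twoheadrightarrow C_4 \times C_4 \times C_4$ with kernel $G^4$. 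The $C_2$-extension $\Q_2(i) \subseteq M \subseteq \tilde F$ corresponds to a surjection $\phi : G \to C_2$ (recall $G = \Gal{\tilde F/\Q_2}$); since $\phi$ is trivial on the group of squares $G^2 \supseteq G^4$, it factors as $G \twoheadrightarrow C_4^3 \xrightarrow{\bar\phi} C_2$, and any such $\bar\phi$ lifts to a surjection $C_4^3 \twoheadrightarrow C_4$. Composing, $\phi$ lifts to a surjection $G \twoheadrightarrow C_4$; the fixed field of its kernel is then a $C_4$-extension of $\Q_2$ containing $\Q_2(i)$, contradicting Lemma \ref{lem:Q_2(i)notInC_4}. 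Hence some $a_i = 1$, which proves (i).

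Almost all of this is routine bookkeeping with Frattini quotients and the structure theorem once Lemma \ref{lem:base_case} supplies the base case. The one step that needs genuine care is the last one for $p=2$: translating ``$\Q_2(i)$ lies in no $C_4$-extension of $\Q_2$'' into the structural statement $\min_i a_i = 1$. This rests on two facts worth spelling out — that when all invariants of $G$ are $\ge 2$ \emph{every} order-$2$ character of $G$ lifts to an order-$4$ character, and that $\Q_2(i)$ genuinely lies inside $\tilde F$, which is precisely what the base case secures.
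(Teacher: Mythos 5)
Your proposal is correct and takes essentially the same route as the paper: bound the topological generator rank of $\Gal{\tilde{F}/K}$ by identifying its Frattini quotient with the Galois group of the compositum computed in lemma \ref{lem:base_case}, and then invoke lemma \ref{lem:Q_2(i)notInC_4} to cut the $p=2$ case down from $\Z_2\times\Z_2\times\Z_2$ to a quotient of $C_2\times\Z_2\times\Z_2$. Your structure-theorem and character-lifting argument for $p=2$ simply spells out, correctly, the final step that the paper states without detail.
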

	\begin{proof}
		\emph{(ii).} By Lemma~\ref{lem:base_case}, the compositum of all $C_p$-extensions of $K$ contained in $\tilde{F}$ is exactly $L := K(\zeta_{p^2},\zeta_{p^p-1})$. As these $C_p$-extensions are the minimal subextensions of $\tilde{F}/K$, the Galois group $\Gal(\tilde{F}/L)$ is exactly the intersection of all maximal subgroups of $\Gal(\tilde{F}/K)$ by the infinite Galois correspondence (cf.~\cite{Szamuely09}, Theorem 1.3.11). In other words, $\Gal(\tilde{F}/L)$ is the Frattini subgroup of $\Gal(\tilde{F}/K)$ and 
		\[
		\Gal(L/K) \cong \Gal(\tilde{F}/K)/\Gal(\tilde{F}/L)
		\] 
		is its Frattini quotient. As $\Gal(L/K) \cong C_p \times C_p$, we see that, as pro-$p$ groups, 
		\[
		\rank(\Gal(\tilde{F}/K)) = \rank(\Gal(L/K)) = \rank(C_p \times C_p) = 2,
		\]
		so $\Gal(\tilde{F}/K)$ must be a quotient of the free abelian pro-$p$ group of rank $2$, which is $\IZ_p \times \IZ_p$.
		
		\emph{(i).} By the same argument as in (ii), we could now conclude that the Galois group of the maximal abelian pro-$2$ extension of $\IQ_2$ is a quotient of $\IZ_2 \times \IZ_2 \times \IZ_2$. However, by Lemma~\ref{lem:Q_2(i)notInC_4}, $\IQ_2(i)$ is not contained in a $C_4$-extension of $\IQ_2$, so $\Gal(\tilde{F}/K)$ must actually be a quotient of $C_2\times \IZ_2 \times \IZ_2$.
	\end{proof}
	
	To show Lemma~\ref{lem:Q_2(i)notInC_4}, we must examine the quadratic extensions of $\IQ_2(i)$. By Kummer theory, these extensions correspond exactly to the 1-dimensional $\IF_2$-subspaces of $\IQ_2(i)^\times/\IQ_2(i)^{\times 2}$. The next lemma calculates a basis of $\IQ_2(i)^\times/\IQ_2(i)^{\times 2}$ as an $\IF_2$-vector space explicitly.
	
	\begin{lemma}
		\label{lem:squaresQ_2(i)}
		$(1+i,1+2i,2,5)$ is a basis of $\IQ_2(i)^\times/\IQ_2(i)^{\times 2}$ as an $\IF_2$-vector space.
	\end{lemma}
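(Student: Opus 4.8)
The plan is to combine the dimension count of Corollary~\ref{cor:dimOfPowerQuotient} with an explicit analysis of second powers in the higher unit filtration of $F:=\Q_2(i)$. Recall $F=\Q_2(\zeta_4)$ is totally ramified of degree $2$ over $\Q_2$, with ramification index $e=2$, inertia degree $f=1$, residue field $\F_2$, and uniformizer $\pi:=1+i$; writing $v:=v_F$ for the extension of $v_2$ and $U^{(n)}:=U_F^{(n)}$, we have $v(\pi)=\tfrac12$, and the identities $\pi^2=2i$, $\pi^4=-4$, $2=-i\,\pi^2$ will be used throughout. Since $\zeta_2=-1\in F$ trivially, Corollary~\ref{cor:dimOfPowerQuotient} gives $\dim_{\F_2}F^\times/F^{\times2}=[F:\Q_2]+2=4$. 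It therefore suffices to show $1+i,\ 1+2i,\ 2,\ 5$ are $\F_2$-linearly independent modulo $F^{\times2}$, i.e.\ that a product $(1+i)^a(1+2i)^b2^c5^d$ with exponents in $\{0,1\}$ is a square in $F$ only for $a=b=c=d=0$.

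First I would peel off $1+i$ and push the remaining generators into the principal unit group $U^{(1)}$. Since $f=1$ we have $\mu_{2^f-1}=\{1\}$ and $\O_F^\times=U^{(1)}$, so $F^\times=\langle\pi\rangle\times U^{(1)}$; applying $v$ and using $v(1+i)=\tfrac12$, $v(2)=1$, $v(1+2i)=v(5)=0$ forces $a$ even, hence $a=0$. As $2=-i\,\pi^2$ with $\pi^2\in F^{\times2}$, we have $2\equiv-i\pmod{F^{\times2}}$ with $-i\in U^{(1)}$, and since $F^{\times2}\cap U^{(1)}=(U^{(1)})^2$ (a square lying in $U^{(1)}$ has a root that is a unit, hence in $U^{(1)}$ because $\overline F=\F_2$), the claim reduces to: $-i,\ 1+2i,\ 5$ are $\F_2$-linearly independent in $U^{(1)}/(U^{(1)})^2$. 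By the proof of Corollary~\ref{cor:dimOfPowerQuotient} and Lemma~\ref{lem:representation}, $|U^{(1)}/(U^{(1)})^2|=2^{[F:\Q_2]+1}=8$, so three independent elements automatically span; translating back along $2\equiv-i$ then gives the asserted basis of $F^\times/F^{\times2}$.

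The core is a description of $(U^{(1)})^2$ relative to the filtration $U^{(1)}\supseteq U^{(2)}\supseteq\cdots$, whose consecutive quotients are each $\cong\F_2$. Three facts suffice: (a) $(U^{(1)})^2\subseteq U^{(2)}$, since for $y\in U^{(1)}$ one has $v(y^2-1)=v(y-1)+v(y+1)\ge\tfrac12+\tfrac12=1$; (b) $U^{(5)}\subseteq(U^{(1)})^2$, which is Corollary~\ref{cor:U_Kpowers_general} applied to $F$ (here $\tfrac{ep}{p-1}=4$, $\tfrac{e}{p-1}=2$); (c) $(U^{(1)})^2/U^{(5)}$ is cyclic of order $2$, generated by $-1$. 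For (c): $|U^{(1)}/U^{(5)}|=2^4$ by Lemma~\ref{lem:representation} and $|U^{(1)}/(U^{(1)})^2|=2^3$, so $|(U^{(1)})^2/U^{(5)}|=2$ by (b); and $-1=i^2\in(U^{(1)})^2$ while $-1\notin U^{(5)}$ since $v(-1-1)=v(-2)=1<\tfrac52$. I would also record the exact filtration levels: $-i\in U^{(1)}\setminus U^{(2)}$ (as $v(-i-1)=v(-\pi)=\tfrac12$), $1+2i,\,-1\in U^{(2)}\setminus U^{(3)}$ (as $v(2i)=v(-2)=1$), and $5\in U^{(4)}\setminus U^{(5)}$ (as $v(4)=2$).

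Now suppose $(-i)^a(1+2i)^b5^d\in(U^{(1)})^2$. If $a=1$, the product is a level-$1$ unit times units of level $\ge2$, hence in $U^{(1)}\setminus U^{(2)}$, contradicting (a); so $a=0$. If then $b=1$, the product $(1+2i)5^d\in U^{(2)}\setminus U^{(3)}$ is not in $U^{(5)}$, so by (c) it is $\equiv-1\pmod{U^{(5)}}$, i.e.\ $-(1+2i)5^d\in U^{(5)}$; but using $v(x)=\tfrac12v_2\!\big(N_{F/\Q_2}(x)\big)$ one gets $v(-2-2i)=\tfrac12v_2(8)=\tfrac32$ (for $d=0$) and $v(-6-10i)=\tfrac12v_2(136)=\tfrac32$ (for $d=1$), both $<\tfrac52$ -- contradiction; so $b=0$. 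Finally if $d=1$, then $5\in(U^{(1)})^2$, yet $5\not\equiv1\pmod{U^{(5)}}$ (as $5\in U^{(4)}\setminus U^{(5)}$) and $5\not\equiv-1\pmod{U^{(5)}}$ (as $v(-5-1)=v_2(6)=1<\tfrac52$), contradicting (c); so $d=0$. Hence $-i,1+2i,5$ are independent, which completes the proof. The only real obstacle is the bookkeeping in the previous paragraph -- establishing that $(U^{(1)})^2/U^{(5)}$ is \emph{exactly} the order-$2$ group $\langle-1\rangle$ and carrying out the handful of small valuation computations correctly; the norm identity $v(a+bi)=\tfrac12v_2(a^2+b^2)$ keeps these short.
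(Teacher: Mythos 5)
Your proof is correct, but it takes a genuinely different route from the paper for the linear-independence step (both arguments start from $\dim_{\F_2}\Q_2(i)^\times/\Q_2(i)^{\times 2}=4$ via Corollary~\ref{cor:dimOfPowerQuotient}). The paper applies the norm map $N_{\Q_2(i)/\Q_2}$ to a hypothetical square $(1+i)^\alpha(1+2i)^\beta 2^\gamma 5^\delta$, uses the independence of $(-1,2,5)$ in $\Q_2^\times/\Q_2^{\times2}$ from lemma~\ref{lem:base_case}(i) to force $\alpha=\beta=0$, and then rules out $2$, $5$, $10$ being squares in $\Q_2(i)$ by recognizing that they would produce $\zeta_8$ or $\zeta_3$ inside $\Q_2(i)$, contradicting the ramification data of lemma~\ref{lem:cycExt}. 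You instead stay entirely inside $\Q_2(i)$: after removing $1+i$ by a valuation parity argument and trading $2$ for the unit $-i$ via $2=-i(1+i)^2$, you pin down $(U^{(1)})^2$ exactly as $\{\pm1\}\cdot U^{(5)}$ -- using Corollary~\ref{cor:U_Kpowers_general} for $U^{(5)}\subseteq (U^{(1)})^2$, Lemma~\ref{lem:representation} for $|U^{(1)}/U^{(5)}|=2^4$, and the count $|U^{(1)}/(U^{(1)})^2|=2^3$ from the proof of Corollary~\ref{cor:dimOfPowerQuotient} -- and then eliminate each exponent with short valuation computations (all of which I checked: the filtration levels of $-i$, $1+2i$, $5$ and the norms $8$, $136$, $6$ are right, and $F^{\times2}\cap U^{(1)}=(U^{(1)})^2$ does hold because $\overline{F}=\F_2$ forces $\O_F^\times=U^{(1)}$). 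The paper's route is shorter because it recycles lemma~\ref{lem:base_case}(i) and lemma~\ref{lem:cycExt}; yours avoids the norm map and any identification of cyclotomic subfields, is purely a unit-filtration computation in the spirit of Lemmas~\ref{lem:representation} and~\ref{lem:U_Kpowers}, and as a by-product determines the square classes in $U^{(1)}$ explicitly ($(U^{(1)})^2=U^{(5)}\cup -U^{(5)}$), which is more information than the lemma itself asks for.
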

	\begin{proof}		
		Let us write $F=\IQ_2(i)$ and denote $v=v_F$. We note that $F/\IQ_2$ is ramified of degree $2$ with uniformiser $\pi = i-1$ since
		\[
		v(\pi) = \frac{1}{2}v(\pi^2)= \frac{1}{2}v(-2i) = \frac{v(2)}{2} + \frac{v(i^2)}{4} = \frac{1}{2}.
		\]
		As a first step, we show that $F^\times/F^{\times 2}$ has $\IF_2$-dimension $4$, for which we generalise the argument in Lemma~\ref{lem:dimOfPowerQuotientWEAK}. We recall that
		\[
		F^\times/F^{\times 2} \cong \pi^{\IZ}/{\pi}^{2\IZ} \times U_F^{(1)}/\big(U_F^{(1)}\big)^2
		\]
		by Fact~\ref{fact:multgrp_direct_prod}, so it suffices to show that
		$U_F^{(1)}/\big(U_F^{(1)}\big)^2$ has $\IF_2$-dimension 3, or equivalently, cardinality 8. Writing 
		\[
		U_F^{(1)}/\big(U_F^{(1)}\big)^2 \cong \frac{ U_F^{(1)}/U_F^{(5)}}{\big(U_F^{(1)}\big)^2/U_F^{(5)}}
		\]
		by Lemma~\ref{lem:U_Kpowers_general} and noting that
		\[
		\left| U_F^{(1)}/U_F^{(5)}\right| = 16
		\]
		by Corollary~\ref{cor:representation}, we observe that it suffices to show
		\[
		\left| \big(U_F^{(1)}\big)^2/U_F^{(5)}\right| = 2.
		\]
		Letting $\Phi: U_F^{(1)}/U_F^{(5)}\rightarrow \big(U_F^{(1)}\big)^2/U_F^{(5)}$ denote the map $x\mapsto x^2$, this is equivalent to showing $|\ker \Phi| = 16/2=8$. 
		
		We note immediately that $\ker \Phi < 16$: For example $\Phi\big((1+\pi)U_F^{(5)}\big)$ is non-trivial as otherwise
		\[
		(1+\pi)^2 \equiv 1 \pmod{\pi^5}
		\]
		by Remark~\ref{rem:mod_rule}, meaning $v( (1+\pi^2)^2 -1) \geq 5v(\pi)$, which contradicts
		\[
		v\left((1+\pi)^2-1\right) =v(\pi^2 +2\pi) = \min\{v(\pi^2), v(2\pi)\}= v(\pi^2) = 2v(\pi).
		\]
		It remains to show $|\ker \Phi| \geq 8$. To do so, we use Corollary~\ref{cor:representation} to represent any of $U_F^{(1)}/U_F^{(5)}$ as 
		\[
		(1+a_1\pi +a_2\pi^2 + a_3\pi^3+a_4\pi^4)U_F^{(5)},
		\]
		with $a_i \in \{0,1\}$ for each $i$; we will show that all elements with $a_1 =0$ are contained in $\ker \Phi$. Indeed, writing $b = a_3\pi^3 + a_4\pi^4 \in (\pi^3)$ to slightly simplify the notation and expanding out, we see (using $a_2^2 = a_2$)
		\begin{equation*}
			(1 + a_2\pi^2 +b)^2 = 1+a_2^2\pi^4 + b^2+2a_2\pi^2 +2b +2a_2\pi^2b \equiv 1+a_2\pi^4+2a_2\pi^2 \pmod{\pi^5},
		\end{equation*}
		where
		\[
		a_2\pi^4+2a_2\pi^2 = a_2\pi^2(\pi^2+2) = a_2\pi^2(-2\pi) \equiv 0  \pmod{\pi^5},
		\]
		so the claim follows from Remark~\ref{rem:mod_rule}.
		
		To show that $\{1+i,1+2i,2,5\}$ forms a basis of $F^\times/F^{\times 2}$, it hence suffices to show that the four elements are linearly independent over $F^{\times 2}$. So assume 
		\[
		x = (1+i)^\alpha\cdot (1 + 2i)^\beta \cdot 2^\gamma \cdot 5^\delta =y^2
		\] 
		for some $y\in F^\times$ and $\alpha, \beta, \gamma, \delta \in \{0,1\}$. Then, denoting by $N$ the field-theoretic norm map of the extension $F/\IQ_2$, 
		\[
		2^\alpha\cdot5^\beta\cdot4^\gamma\cdot 25^\delta = N(x) = N(y^2) = N(y)^2 \in \IQ_2^{\times 2},
		\]
		so $\alpha = \beta = 0$ by the proof of Lemma~\ref{lem:base_case}(i). It remains to exclude the three cases $(\gamma, \delta) = (1,0), (0,1), (1,1)$.
		\begin{itemize}
			\item $(\gamma, \delta) = (1,0)$ would mean that $2 \in F^{\times 2}$, so $F = \IQ_2(i, \sqrt{2})$. But then also $\zeta_8 = \frac{1+i}{\sqrt{2}} \in F$, contradicting the fact that $\IQ_2(\zeta_8)$ is totally ramified of degree $4 \neq 2$ over $\IQ_2$ by Lemma~\ref{lem:cycExt}.
			\item $(\gamma, \delta) = (0,1)$ would mean that $F = \IQ_2(i, \sqrt{5})$. As $5 \equiv -3 \pmod{2^3}$, Lemma~\ref{lem:dimOfPowerQuotientWEAK} further implies that $F= \IQ_2(i, \sqrt{-3})$. In particular, $\zeta_3 = \frac{-1+\sqrt{-3}}{2} \in F$, contradicting the fact that $\IQ_2(\zeta_3)$ is the unique unramified extension of $\IQ_2$ of degree 2 by Lemma~\ref{lem:cycExt}.
			\item $(\gamma, \delta) = (1,1)$ would mean that $\IQ_2(i) = \IQ_2(i, \sqrt{2\cdot 5})$. However, this would imply that $\IQ_2(i, \sqrt{5}) = \IQ_2(i, \sqrt{2})$, which contradicts what we have shown in the previous two items. \qedhere
		\end{itemize}    
	\end{proof}
	
	We can now deduce Lemma~\ref{lem:Q_2(i)notInC_4}.
	\begin{proof}[Proof (Lemma~\ref{lem:Q_2(i)notInC_4}).]
		We continue to write $F = \IQ_2(i)$ and let $L/\IQ_2$ be a Galois extension of degree 4 containing $F$. Then $[L: F] = 2$, so $L = F(\sqrt{x})$ by Kummer theory. By the previous lemma, we may assume $x = (1+i)^\alpha\cdot (1 + 2i)^\beta \cdot 2^\gamma \cdot 5^\delta$ for $\alpha, \beta, \gamma, \delta \in \{0,1\}$. Let us assume we could show $\alpha = \beta = 0$. Then $x$ would be one of 2, 5, 10, and either way $\IQ_2(\sqrt{x})$ is an intermediate extension of degree 2, meaning $L/\IQ_2$ has two distinct proper non-trivial intermediate fields, so it cannot be a $C_4$-extension.
		
		It hence remains to prove $\alpha = \beta = 0$. If $\sigma$ denotes the generator of $\Gal(F/\IQ_2)$ (i.e., $\sigma$ is complex conjugation), then, $F(\sqrt{x}) = F\big(\sqrt{\sigma(x)}\big)$, so $\sigma(x) \in \langle x \rangle F^{\times 2}$ by Kummer theory. In particular, $\sigma(x) \equiv x \pmod{F^{\times 2}}$.
		
		We now consider the different cases for $\alpha$ and $\beta$, starting with $\alpha = \beta =1$. Then
		\[
		x \equiv (1+i)\cdot (1 + 2i) \cdot 2^\gamma \cdot 5^\delta \pmod{F^{\times 2}},
		\]
		and
		\begin{align*}
			\sigma(x)& \equiv (1-i)\cdot (1 - 2i) \cdot 2^\gamma \cdot 5^\delta \\
			&\equiv (1-i)\cdot (1 - 2i) \cdot 2^\gamma \cdot 5^\delta (1+i)^2(1+2i)^2 \\ 
			& \equiv (1+i)\cdot (1 + 2i) \cdot 2^{\gamma + 1} \cdot 5^{\delta + 1} \pmod{F^{\times 2}}.
		\end{align*}
		Together the last two equations imply $0 \equiv 2\cdot 5 \pmod{F^{\times 2}}$, contradicting the linear independence proved in Lemma~\ref{lem:squaresQ_2(i)}. Similarly, we get contradictions $2, 5 \equiv 0 \pmod{F^{\times 2}}$ for $(\alpha, \beta) = (1,0), (0,1)$, respectively.
	\end{proof}
	
	For the next step of our argument, we want to show that $\tilde{F}$ from Corollary~\ref{cor:upperBound} is cyclotomic. To do so, we prove that the groups calculated in Corollary~\ref{cor:upperBound} actually appear as the Galois groups of cyclotomic extensions of $K$.
	\begin{lemma}
		\label{lem:maxcyclotomicp>2}
		Let $F=\IQ_p(\zeta_{p^{p^\infty}-1},\zeta_{p^\infty})$. Then 
		\begin{itemize}
			\item[(i)] $\Gal(F/K) = \Gal(F/\IQ_2) \cong C_2 \times \IZ_2 \times \IZ_2$ for $p=2$, and
			\item[(ii)] $\Gal(F/K) \cong \IZ_p \times \IZ_p$ for $p>2$.
		\end{itemize}
	\end{lemma}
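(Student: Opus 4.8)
The plan is to split $F$ into its ``unramified half'' $E_\infty := \Q_p(\zeta_{p^{p^\infty}-1}) = \bigcup_{n\ge1}\Q_p(\zeta_{p^{p^n}-1})$ and its ``totally ramified half'' $Z_\infty := \Q_p(\zeta_{p^\infty}) = \bigcup_{n\ge1}\Q_p(\zeta_{p^n})$, to compute $\Gal{E_\infty/\Q_p}$ and $\Gal{Z_\infty/\Q_p}$ (and $\Gal{Z_\infty/K}$) by passing to the inverse limit over the finite layers, and then to reassemble $\Gal{F/K}$ using that an unramified and a totally ramified extension are linearly disjoint. Since $K = \Q_p(\zeta_p) \subseteq Z_\infty$ (and $K = \Q_2$ when $p = 2$), we have $F = E_\infty\cdot Z_\infty$ in either case.

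First I would handle $E_\infty$. By the first bullet of lemma \ref{lem:cycExt}, each $\Q_p(\zeta_{p^{p^n}-1})/\Q_p$ is unramified and its degree is the smallest $d$ with $p^d\equiv 1\Mod{p^{p^n}-1}$, which is $d=p^n$; so these fields are cofinal among the unramified extensions of $\Q_p$ of $p$-power degree, nest as $n$ grows, and carry compatible cyclic Galois groups $C_{p^n}$. Hence, by the infinite Galois correspondence (cf.~\cite{Szamuely09}), $\Gal{E_\infty/\Q_p}\cong\varprojlim_n\Z/p^n\Z\cong\Z_p$ and $E_\infty/\Q_p$ is unramified. Next I would handle $Z_\infty$: by the second bullet of lemma \ref{lem:cycExt}, the isomorphisms $\Gal{\Q_p(\zeta_{p^n})/\Q_p}\cong(\Z/p^n\Z)^\times$ are compatible in $n$ and every layer is totally ramified, so $\Gal{Z_\infty/\Q_p}\cong\Z_p^\times$ and $Z_\infty/\Q_p$ is totally ramified. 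For $p>2$, the subfield $K=\Q_p(\zeta_p)$ corresponds under this isomorphism to the kernel of reduction $\Z_p^\times\twoheadrightarrow\F_p^\times$, i.e.\ to the principal units $1+p\Z_p$; since $(1+p\Z_p)/(1+p^n\Z_p)$ is cyclic of order $p^{n-1}$ (topologically generated by $1+p$), we get $\Gal{Z_\infty/K}\cong 1+p\Z_p\cong\Z_p$. For $p=2$ one uses instead the standard decomposition $\Z_2^\times\cong C_2\times\Z_2$, with $-1$ of order $2$ and $5$ topologically generating $1+4\Z_2$.

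Finally I would glue the two halves. Any subextension of $\Q_p$ contained in both $E_\infty$ and $Z_\infty$ is simultaneously unramified and totally ramified over $\Q_p$, hence equals $\Q_p$; the same argument over $K$ shows $(E_\infty K)\cap Z_\infty = K$, since $E_\infty K/K$ is unramified and $Z_\infty/K$ is totally ramified. As $E_\infty K/K$ and $Z_\infty/K$ are both Galois with trivial intersection and compositum $F$, restriction gives
\[
	\Gal{F/K}\ \xrightarrow{\ \sim\ }\ \Gal{E_\infty K/K}\times\Gal{Z_\infty/K}\ \cong\ \Gal{E_\infty/\Q_p}\times\Gal{Z_\infty/K},
\]
the last step because $E_\infty/\Q_p$ is Galois with $E_\infty\cap K=\Q_p$. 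Substituting the computations above gives $\Z_p\times\Z_p$ for $p>2$, and $\Z_2\times(C_2\times\Z_2)\cong C_2\times\Z_2\times\Z_2$ for $p=2$ (where $\Gal{F/K}=\Gal{F/\Q_2}$ because $K=\Q_2$), which is exactly the assertion.

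I expect no serious obstacle here: the work is bookkeeping rather than a genuine difficulty. The two points that need care are (a) verifying that the chosen cyclotomic degrees are cofinal among powers of $p$, so that the inverse limits come out as $\Z_p$ and $\Z_p^\times$ rather than $\widehat{\Z}$, and (b) making sure the ``unramified $\times$ totally ramified'' product decomposition of Galois groups is applied legitimately for infinite extensions, which one justifies by reducing to finite subextensions and invoking the infinite Galois correspondence. The single place where the cases $p>2$ and $p=2$ genuinely diverge is the identification of $1+p\Z_p$ (resp.\ of $\Z_2^\times$) as the relevant piece of $\Gal{Z_\infty/\Q_p}$.
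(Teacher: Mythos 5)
Your proof is correct, and for part (ii) it is essentially the paper's argument: both decompose $F$ into the unramified tower $\Q_p(\zeta_{p^{p^n}-1})$ and the totally ramified tower $\Q_p(\zeta_{p^m})$, use linear disjointness (unramified vs.\ totally ramified) at each finite level, and pass to the inverse limit. Where you genuinely diverge is in identifying the ramified piece, especially for $p=2$. You invoke the standard structure theory of $\Z_p^\times$: the compatibility of the isomorphisms $\Gal{\Q_p(\zeta_{p^n})/\Q_p}\cong(\Z/p^n\Z)^\times$ with restriction (i.e.\ the cyclotomic character), the fact that $1+p\Z_p\cong\Z_p$ is topologically generated by $1+p$ for $p>2$, and $\Z_2^\times\cong\{\pm1\}\times(1+4\Z_2)\cong C_2\times\Z_2$ with $5$ a topological generator. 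None of these is supplied by the paper's lemma \ref{lem:cycExt}, which only asserts abstract isomorphisms at each finite level, and the paper's self-contained philosophy is precisely why it does not take this shortcut: for $p>2$ it gets $\Gal{K(\zeta_{p^m})/K}\cong C_{p^{m-1}}$ purely from the coprime factorization $C_{p^{m-1}}\times C_{p-1}$, and for $p=2$ it constructs the splitting by hand, setting $a_k=\zeta_{2^k}+\zeta_{2^k}^{-1}$, proving $\Q_2(\zeta_{2^k})=\Q_2(i,a_k)$ with $[\Q_2(a_k):\Q_2]=2^{k-2}$ via the recursion $a_{k+1}=\sqrt{2+a_k}$ and a valuation computation, and producing a generator of $\Gal{\Q_2(a_k)/\Q_2}$ fixing $i$ by comparison with $\Gal{\Q(\zeta_{2^k})/\Q}$. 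So your route is shorter but imports standard facts about $\Z_p^\times$ (which are elementary to prove, e.g.\ by Hensel's lemma or a binomial estimate, but would need to be proved or cited to meet the paper's standard), whereas the paper's longer argument buys a completely self-contained verification of exactly the decomposition you assert; if you add a short proof of the cyclic structure of the principal units (or of the concrete generators $-1$, $5$ for $p=2$) and note why the level-by-level isomorphisms are restriction-compatible, your version is a legitimate and somewhat slicker alternative.
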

	\begin{proof}
		\emph{(ii).} From infinite Galois theory, we have $\Gal(F/K) \cong \varprojlim \Gal(E/K)$, where $E$ runs over the directed partially ordered set $I$ of finite Galois subextensions of $F/K$ (cf.~\cite{Szamuely09}, Proposition 1.3.5). By definition of $F$, these finite subextensions are of the form $E = \IQ_p(\zeta_{p^{p^n}-1}, \zeta_{p^m})$ for some non-negative integers $m,n$. Now $K(\zeta_{p^{p^n}-1})/K$ is unramified of degree $p^n$ with $\Gal(K(\zeta_{p^{p^n}-1})/K) \cong C_{p^n}$ by Lemma~\ref{lem:cycExt}. $K(\zeta_{p^m})/K$, on the other hand, is totally ramified by Lemma~\ref{lem:cycExt}, and
		\[
		\Gal(K(\zeta_{p^m})/\IQ_p) = \Gal(\IQ_p(\zeta_{p^m})/\IQ_p) \cong (\IZ/p^m\IZ)^\times \cong C_{p^{m-1}(p-1)} \cong C_{p^{m-1}} \times C_{p-1}.
		\]
		Hence, as $(p^{m-1}, p-1) =1$, 
		\[
		\Gal(K(\zeta_{p^m})/K)\cong \Gal(\IQ_p(\zeta_{p^m})/\IQ_p)/\Gal(K/\IQ_p) \cong C_{p^{m-1}}.
		\] Finally, $K(\zeta_{p^{p^n}-1})$ and $K(\zeta_{p^m})$ are linearly disjoint as one extension is unramified and the other one totally ramified. Thus, $\Gal(E/K) = C_{p^n} \times C_{p^{m-1}}$, and the claim follows since $\IZ_p = \varprojlim_k C_{p^k}$.
		
		\emph{(i).} As $\IQ_2(\zeta_{2^\infty})/\IQ_2$ is totally ramified and $\IQ_2(\zeta_{2^{2^\infty}-1})/\IQ_2$ is unramified, we see that both extensions are linearly disjoint over $\IQ_2$, so 
		\[
		\Gal(F/\IQ_2) \cong \Gal(\IQ_2(\zeta_{2^\infty})/\IQ_2) \times \Gal(\IQ_2(\zeta_{2^{2^\infty}-1})/\IQ_2).
		\]
		By Lemma~\ref{lem:cycExt},
		\[
		\Gal(\IQ_2(\zeta_{2^{2^\infty}-1})/\IQ_2) \cong \varprojlim_n \Gal(\IQ_2(\zeta_{2^{2^n}-1})/\IQ_2) \cong \varprojlim_n\, C_{2^n} = \IZ_2.
		\]
		It remains to show that $\Gal(\IQ_2(\zeta_{2^\infty})/\IQ_2)\cong C_2 \times \IZ_2$. Again, we have
		\[
		\Gal(\IQ_2(\zeta_{2^\infty})/\IQ_2) \cong \varprojlim_n \Gal(\IQ_2(\zeta_{2^n})/\IQ_2) \cong \varprojlim_n\, C_{2^n}^{\times} \cong \varprojlim_n(C_2 \times C_{2^{n-2}})
		\]
		by Lemma~\ref{lem:cycExt}, so the claim follows if we can show that the projective limit splits as the product of the inverse limits of the two factors $C_2$ and $C_{2^{n-2}}$. To do so, we analyse the projection maps in the projective system. 
		
		Observe that the elements of $\Gal(\IQ_2(\zeta_{2^n})/\IQ_2)$ are precisely the automorphisms $\sigma_{n,k}$, defined by 
		\[
		\sigma_{n,k}(\zeta_{2^n}) \coloneq \zeta_{2^n}^k, 
		\]
		where $k\in(\IZ/2^n\IZ)^\times$. Then the natural projection
		\[
		\Gal(\IQ_2(\zeta_{2^{n}})/\IQ_2)\longrightarrow\Gal(\IQ_2(\zeta_{2^{n-1}})/\IQ_2)
		\] 
		sends $\sigma_{n,k}$ to $\sigma_{n-1,k}$ since
		\[
		\sigma_{n-1,k}(\zeta_{2^{n-1}})= \sigma_{n,k}(\zeta_{2^{n-1}}) = \sigma_{n,k}\big(\zeta_{2^{n}}^2\big) = \zeta_{2^{n}}^{2k}=\zeta_{2^{n-1}}^k,
		\]
		which, under the isomorphism 
		\[
		\Gal(\IQ_2(\zeta_{2^n})/\IQ_2) \cong (\IZ/2^n\IZ)^{\times},
		\]
		corresponds to the reduction map $k\mapsto k\pmod{2^{n-1}}$  Hence, if we use the standard decomposition $(\IZ/2^n\IZ)^\times\cong\{\pm1\}\times\langle 5\rangle\cong C_2\times C_{2^{n-2}}$, the projection maps in the projective system carry the $C_2$–factor to the $C_2$–factor and the $C_{2^{n-2}}$–factor to the $C_{2^{n-3}}$–factor compatibly, yielding the required decomposition.
	\end{proof}
	By Lemma~\ref{lem:cycExt}, $F$ from the previous lemma is a pro-$p$ extension of $K$ that is abelian over $\IQ_p$. In particular, $F \subseteq \tilde{F}$. Hence, together with Corollary~\ref{cor:upperBound} and the previous lemma, this gives rise to maps
	\[
	C_2\times \IZ_2 \times \IZ_2  \twoheadrightarrow \Gal(\tilde{F}/K) \stackrel{|_F}{\twoheadrightarrow} \Gal(F/K) \cong C_2\times \IZ_2 \times \IZ_2
	\]
	for $p=2$, and
	\[
	\IZ_p \times \IZ_p \twoheadrightarrow \Gal(\tilde{F}/K) \stackrel{|_F}{\twoheadrightarrow} \Gal(F/K) \cong \IZ_p \times \IZ_p
	\]
	for $p>2$. In either case, all epimorphisms are necessarily isomorphisms. Therefore, $F = \tilde{F}$ by the infinite Galois correspondence. 
	
	We can now readily conclude the wild case and thus, our proof of Theorem~\ref{thm:myKW}. Indeed, given an extension $L/\IQ_p$ of degree $p^l$, we can see that the maximal abelian $p$-extension of $K$ of exponent $p^l$ that is also abelian over $\IQ_p$ is contained in
	\[\begin{gathered}        
		\IQ_2(\zeta_4, \zeta_{2^{l+2}}, \zeta_{2^{2^l}-1}) = \IQ_2(\zeta_{2^{l+2}}, \zeta_{2^{2^l}-1}) \text{ for $p=2$, and}\\
		\IQ_p(\zeta_p)(\zeta_{p^{l+1}},\zeta_{p^{p^l}-1})=\IQ_p(\zeta_{p^{l+1}}, \zeta_{p^{p^l}-1}) \text{ for $p>2$}.
	\end{gathered}\]
	Either way, $L \subseteq \IQ_p(\zeta_{p^{l+2}}, \zeta_{p^{p^l}-1})$.
	
	\begin{remark}
		\label{rem:Washington}
		As noted in the introduction, Washington’s book \cite[Chapter~14]{Washington97} contains a proof of the local Kronecker--Weber theorem that is structurally close to the one presented here. We therefore end this paper by comparing the two approaches, clarifying both their common strategy and conceptual differences.
		
		At a high level, both proofs proceed by decomposing a finite abelian extension $L/\IQ_p$ into its tamely ramified and wildly ramified parts. In the tame case, both arguments essentially follow the strategy outlined in the first paragraph of the proof of Lemma~\ref{lem:tame KW}. The treatment above is slightly more streamlined than Washington’s, as it develops the necessary constructions within a unified framework. Washington instead first uses Krasner’s lemma to show that $L$ can be written as $L = L_0(\sqrt[q^r]{a})$ for some $a$ \cite[Lemma~14.5]{Washington97}, and only later proves that one may take $a = -up$ \cite[p.~324]{Washington97}. Moreover, his argument does not make explicit the cyclotomic extensions required at each step; for example, in passing from $\sqrt[e]{-pu}$ to $\sqrt[e]{-p}$, it is not specified which roots of unity must be adjoined. As a result, Washington’s proof does not yield an explicit description of the relevant cyclotomic extensions, unlike ours. 
		
		Moreover, another methodological difference that already becomes apparent in the tame case is that Washington frequently employs analytic arguments. For instance, Washington shows that $\IQ_p(\sqrt[p-1]{-p}) = \IQ_p(\zeta_p)$ using analytic arguments that rely on the completeness of $\IQ_p$ (cf.\ \cite[Lemma~14.6]{Washington97}), whereas our proof relies exclusively on algebraic and valuation-theoretic methods.
		
		Even more substantial differences arise in the treatment of the wildly ramified case. Washington distinguishes between the cases $p \neq 2$ and $p = 2$, analysing them separately in \cite[Case~II, p.~325]{Washington97} and \cite[Case~III, p.~328]{Washington97}. When $p \neq 2$, his argument shows that any abelian extension not contained in a field of the form $\IQ_p(\zeta_{p^{m+1}}, \zeta_n)$ gives rise to an extension $N/\IQ_p$ with $\Gal(N/\IQ_p) \cong C_p^3$. To exclude this possibility, Washington employs a strategy closely related to the one used in Lemma~\ref{lem:base_case}: after adjoining $\zeta_p$, Kummer theory over $K=\IQ_p(\zeta_p)$ is applied, and the action of $\Gal(K/\IQ_p)$ on $K^\times/K^{\times p}$ is analysed in order to determine all possible generators explicitly. A key technical input in this argument is the statement proved here in Lemma~\ref{lem:U_Kpowers}, which Washington establishes using the logarithm on local fields.
		
		The case $p = 2$ is treated in a similar spirit. Here, Washington identifies extensions $N/\IQ_2$ with $\Gal(N/\IQ_2) \cong C_2^4$ and $\Gal(N/\IQ_2) \cong C_4^3$ as the two problematic cases that must be excluded. The former is ruled out using his analogue of Lemma~\ref{lem:U_Kpowers}, while the latter is excluded by observing that the equation $X^2 + Y^2 = -1$ has no solution in $\IQ_2$.
		
		In contrast, the approach taken in this paper treats the wild case for all primes $p$ within a single algebraic framework. By computing explicit representatives for the relevant quotients of unit groups and analysing their behaviour under the Galois action, we are able to determine explicitly the compositum of all $C_p$-extensions of $K$ that are abelian over $\IQ_p$ (Lemma~\ref{lem:base_case}), a point that is only implicit in Washington’s treatment for $p\neq 2$ and is not carried out for $p = 2$. The additional subtleties arising when $p = 2$ therefore emerge naturally from these computations, rather than necessitating a separate structural argument. An additional advantage of this method is that it allows us to determine the relevant absolute Galois group explicitly.
		
		In summary, the proof presented here relies exclusively on algebraic and valuation-theoretic arguments, together with explicit calculations in unit groups, and avoids analytic tools such as logarithmic or exponential series on local fields, which leads to a streamlined and explicit proof of the local Kronecker--Weber theorem.
	\end{remark}
	
\section*{Acknowledgments}
We are grateful to Edern Gillot for suggesting a simpler argument in the proof of Lemma~\ref{lem:maxcyclotomicp>2}, and to Arno Fehm for pointing out an inaccuracy in an earlier version of the proof of Lemma~\ref{lem:tame KW}. We also thank Leo Gitin for valuable feedback and many helpful comments that improved the presentation throughout the paper.
\newpage

\printbibliography

\end{document}